\newtheorem{theorem}{Theorem}[section]
\newtheorem{lemma}{Lemma}[section]
\newtheorem{remark}{Remark}[section]
\newtheorem{algorithm}{Algorithm}[section]
\newtheorem{proposition}{Proposition}[section]
\begin{document}
\title{A Multigrid Method Based On Shifted-Inverse Power Technique
for Eigenvalue  Problems\footnote{This work is
supported in part by the National Science Foundation of China
(NSFC 91330202, 11001259, 11371026, 11201501, 11301437, 11031006,11171251,11201501
2011CB309703),  the Natural Science Foundation of Fujian Province
of China(No.2013J05015), the National Basic Research Program (2012CB955804),
Tianjin University of Finance and Economics (ZD1302), the National Center for Mathematics and
Interdisciplinary Science, CAS and the President Foundation of AMSS-CAS.}}
\author{
Hongtao Chen\footnote{School of Mathematical Sciences, Xiamen University,
 Xiamen 361005, China(chenht@xmu.edu.cn)}, \ \
Yunhui He\footnote{LSEC, ICMSEC, Academy of Mathematics and Systems
Science, Chinese Academy of Sciences,  Beijing 100190,
China(heyunhui@lsec.cc.ac.cn)},\ \ Yu Li\footnote{Research Center
for Mathematics and Economics, Tianjin University of Finance
and Economics, Tianjin 300222, China(liyu@lsec.cc.ac.cn)}\ \ and\ \
Hehu Xie\footnote{LSEC, NCMIS, Institute
of Computational Mathematics, Academy of Mathematics and Systems
Science, Chinese Academy of Sciences, Beijing 100190,
China(hhxie@lsec.cc.ac.cn)} }
\date{}
\maketitle
\begin{abstract}
A multigrid method is proposed in this paper to solve
eigenvalue problems by the finite element method based on the
shifted-inverse power iteration technique. With this scheme, solving
eigenvalue problem is transformed to a series of nonsingular solutions
of boundary value problems on multilevel meshes. Since replacing the difficult
eigenvalue solving by the easier solution of boundary value problems,
the multigrid way can improve the overall efficiency of the eigenvalue problem solving.
Some numerical experiments are presented to validate the efficiency of this new method.

\vskip0.3cm {\bf Keywords.} Eigenvalue problem, multigrid,
shifted-inverse power iteration, finite element method.

\vskip0.2cm {\bf AMS subject classifications.} 65N30, 65N25, 65L15, 65B99.
\end{abstract}

\section{Introduction}
Solving large scale eigenvalue problems becomes a fundamental problem
in modern science and engineering society. However, it is always a very
difficult task to solve high-dimensional eigenvalue problems which come from
physical and chemical sciences. About the solution of eigenvalue problems,
\cite{BrandtMcCormickRuge, Hackbusch,Hackbusch_Book,HuCheng,Shaidurov,YangBi}
and the references cited therein give some types of multigrid schemes which
couple the multigrid method with the Rayleigh quotient iteration technique.
The involved almost singular linear problems in these methods lead to the
numerical unstability. So it is required to design some special solver for
these almost singular linear problems \cite{Hackbusch_Book,Shaidurov}.

The aim of this paper is to present a type of shifted-inverse power iteration
method to solve the eigenvalue problem based on the multigrid technique.
Compared with the existed
works \cite{BrandtMcCormickRuge, Hackbusch,Hackbusch_Book,Shaidurov,YangBi},
the method here is a generalization of the Rayleigh quotient iteration
technique and does not need to solve the singular linear problems.
Recently, we propose another type of multigrid method \cite{Xie_IMA,Xie_JCP}
based on the multilevel correction method \cite{LinXie} which transforms the solution
of eigenvalue problem to a series of boundary value problem solving and
eigenvalue problem solving in a very coarse space. The proposed method here
does not need to solve the eigenvalue problem in the coarse space but needs the
eigenvalue problem possessing good eigenvalue separations and the initial approximation
 having good accuracy.
The standard Galerkin finite element method for eigenvalue problems
has been extensively investigated, e.g. Babu\v{s}ka and Osborn
\cite{Babuska2,BabuskaOsborn}, Chatelin \cite{Chatelin} and
references cited therein. Here we adopt some basic results in these
papers for our analysis.
The corresponding error and computational work discussion of the proposed iteration
scheme will be analyzed. Based
on the analysis, the new method can obtain optimal errors with an optimal
computational work when we can solve the associated linear problems with the
optimal complexity.

In order to describe our method clearly,
we give the following simple Laplace eigenvalue problem to
illustrate the main idea in this paper (sections 3 and 4).

Find $(\lambda,u)\in \mathcal{R}\times H_0^1(\Omega)$ such that
\begin{equation}\label{problem}
\left\{
\begin{array}{rcl}
-\Delta u&=&\lambda u,\quad{\rm in}\ \Omega,\\
u&=&0,\quad\ \  {\rm on}\ \partial\Omega,\\
\int_{\Omega} u^2d\Omega&=&1,
\end{array}
\right.
\end{equation}
where $\Omega\subset\mathcal{R}^2$ is a bounded domain with
Lipschitz boundary $\partial\Omega$ and $\Delta$ denotes the Laplace
operator.

First, we construct a series of finite element spaces $V_{h_1}$, $ V_{h_2}$,
$\cdots$, $V_{h_n}$ which are subspaces of $H_0^1(\Omega)$ and defined on
the corresponding series of multilevel
meshes $\mathcal{T}_{h_k}\ (k=1,2,\cdots n)$ such that $V_{h_1}\subset
V_{h_2}\subset \cdots \subset V_{h_n}$ and $h_k=h_{k-1}/\beta$
(see, e.g., \cite{BrennerScott,Ciarlet}). Our multigrid algorithm
to obtain the approximation of the eigenpairs can be defined as
follows (see sections 3 and 4):
\begin{enumerate}
\item Solve an eigenvalue problem in the coarsest space $V_{h_1}$:

Find $(\lambda_{h_1}, u_{h_1})\in \mathcal{R}\times V_{h_1}$ such that
$\|\nabla u_{h_1}\|_0=1$ and
\begin{eqnarray*}
\int_{\Omega}\nabla u_{h_1}\nabla v_{h_1}d\Omega &=&
\lambda_{h_1}\int_{\Omega}u_{h_1}v_{h_1}d\Omega,\ \ \ \forall v_{h_1}\in V_{h_1}.
\end{eqnarray*}
\item Do $k=1,\cdots, n-1$
\begin{itemize}
\item
Solve the following auxiliary boundary value  problem:

Find $\widetilde{u}_{h_{k+1}}\in V_{h_{k+1}}$ such that for any
$v_{h_{k+1}}\in V_{h_{k+1}}$
\begin{eqnarray*}
\int_{\Omega}\big(\nabla \widetilde{u}_{h_{k+1}}\nabla v_{h_{k+1}}
-\alpha_{k+1}\widetilde{u}_{h_{k+1}}v_{h_{k+1}}\big)d\Omega
&=&\int_{\Omega}u_{h_k}v_{h_{k+1}}d\Omega.
\end{eqnarray*}
\item
Do the normalization
\begin{eqnarray*}
u_{h_{k+1}} &=& \frac{\widetilde{u}_{h_{k+1}}}{\|\nabla\widetilde{u}_{h_{k+1}}\|_0}
\end{eqnarray*}
and compute the Rayleigh quotient for $u_{h_{k+1}}$
\begin{eqnarray*}
 \lambda_{h_{k+1}}&=&\frac{\|\nabla u_{h_{k+1}}\|_0^2}{\|u_{h_{k+1}}\|_0^2}.
\end{eqnarray*}
\end{itemize}
End Do
\end{enumerate}
If, for example, $\lambda_{h_1}$ is the approximation for the first eigenvalue of the
problem (\ref{problem}) at the first step and $\Omega$ is a convex domain, then we can establish the
following results by taking a suitable choice of $\alpha_{k+1}$ (see sections 3 and 4 for details)
\begin{eqnarray*}
\|\nabla (u-u_{h_n})\|_0
=\mathcal{O}(h_n), \ \ \ {\rm and}\ \ \
|\lambda-\lambda_{h_n}| =\mathcal{O}(h_n^2).
\end{eqnarray*}
These two estimates mean that we obtain asymptotically optimal
errors.

In this method, we replace solving eigenvalue problem on the finest
finite element spaces by solving a series of boundary value problems
in the corresponding series of finite element spaces and an
eigenvalue problem in the initial finite element space.

An outline of the paper goes as follows. In Section 2, we introduce the
finite element method for the eigenvalue problem and give the corresponding basic
error estimates. A type of one shifted-inverse power iteration step is given in Section
3. In Section 4, we propose a type of multigrid
algorithm for solving the eigenvalue problem based on the shifted-inverse power iteration step.
The computational work estimate of the eigenvalue multigrid method is given in Section 5.
In Section 6, two numerical examples are presented to validate our
theoretical analysis. Some concluding remarks are given in the last section.

\section{Discretization by finite element method}
In this section, we introduce some notation and error estimates of
the finite element approximation for the eigenvalue problem.
The letter $C$ (with or without subscripts) denotes a generic
positive constant which may be different at its different occurrences through the paper.
For convenience, the symbols $\lesssim$, $\gtrsim$ and $\approx$
will be used in this paper. That $x_1\lesssim y_1, x_2\gtrsim y_2$
and $x_3\approx y_3$, mean that $x_1\leq C_1y_1$, $x_2 \geq c_2y_2$
and $c_3x_3\leq y_3\leq C_3x_3$ for some constants $C_1, c_2, c_3$
and $C_3$ that are independent of mesh sizes (see, e.g., \cite{Xu}).

Let $(V, \|\cdot\|)$ be a real Hilbert space with inner product
$(\cdot, \cdot)$ and norm $\|\cdot\|$, respectively. Let
$a(\cdot,\cdot)$, $b(\cdot,\cdot)$ be two symmetric bilinear forms
on $V \times V$ satisfying
\begin{eqnarray}
a(w, v) &\lesssim&\|w\|\|v\|, \ \ \forall w,v\in V, \label{Bounded}\\
\|w\|^2&\lesssim& a(w,w),\ \ \forall w\in V, \label{Coercive}\\
0&<&b(w,w), \ \ \forall 0\neq w\in V.
\end{eqnarray}
From (\ref{Bounded}) and (\ref{Coercive}), we know that
$\|\cdot\|_a:=a(\cdot,\cdot)^{1/2}$ and $\|\cdot\|$ are two
equivalent norms on $V$. We assume that the norm $\|\cdot\|$ is
relatively compact with respect to the norm
$\|\cdot\|_b:=b(\cdot,\cdot)^{1/2}$ in the sense that
any sequence which is bounded in $\|\cdot\|$, one can extract
a subsequence which is Cauchy with respect to $\|\cdot\|_b$.
We shall use $a(\cdot,\cdot)$
and $\|\cdot\|_a$, respectively, as the inner product and norm on $V$ in the rest
of this paper.

We assume that
$V_h\subset V$ is a family of finite-dimensional spaces that satisfy
the following assumption:

For any $w \in V$
\begin{eqnarray}\label{Approximation_Property}
\lim_{h\rightarrow0}\inf_{v_h\in V_h}\|w-v_h\|_a = 0.
\end{eqnarray}
Let $P_h$ be the finite element projection operator of $V$ onto
$V_h$ defined by
\begin{eqnarray}\label{Projection_Problem}
a(w - P_hw, v_h) = 0,\ \ \ \forall w\in V,\ \  \forall v_h\in
V_h.
\end{eqnarray}
Obviously
\begin{eqnarray}\label{Projection_Optimal}
\|P_hw\|_a\leq \|w\|_a,\ \ \ \forall w\in V.
\end{eqnarray}
For any $w\in V$, by (\ref{Approximation_Property}) we have
\begin{eqnarray}
\|w-P_hw\|_a&=&o(1),\ \ \ {\rm as}\ h\rightarrow 0.
\end{eqnarray}
Define $\eta_a(h)$ as
\begin{eqnarray}
\eta_a(h)=\sup_{f\in V',\|f\|_b=1}\inf_{v_h\in V_h}\|T f-v_h\|_a,
\end{eqnarray}
where $V'$ denotes the dual space of $V$ and the operator $T: V'\rightarrow V$
is defined as
\begin{eqnarray}
a(Tf,v)&=&b(f,v),\ \ \ \forall f\in V', \ \ \forall v\in V.
\end{eqnarray}
In order to derive the error estimate of eigenpair approximation in
the weak norm $\|\cdot\|_{b}$, we need the following weak norm
error estimate of the finite element projection operator $P_h$.
\begin{lemma}\label{Negative_norm_estimate_Lemma}
(\cite[Lemma 3.3 and Lemma 3.4]{BabuskaOsborn})
\begin{eqnarray}
\eta_a(h)\rightarrow0,\ \ \ \ {\rm as}\ h\rightarrow 0,
\end{eqnarray}
and
\begin{eqnarray}\label{Negative_norm_Error}
\|w-P_hw\|_{b}&\lesssim&\eta_a(h)\|w-P_hw\|_a,\ \ \ \forall w\in V.
\end{eqnarray}
\end{lemma}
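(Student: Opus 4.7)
The plan is to treat the two conclusions separately, both resting on the observation that the solution operator $T$ is compact when $V'$ is normed by $\|\cdot\|_b$ and $V$ by $\|\cdot\|_a$, together with the Galerkin orthogonality built into $P_h$. I expect the harder conclusion to be the first, $\eta_a(h)\to 0$, since it requires upgrading the pointwise approximation property \eqref{Approximation_Property} to a uniform statement over the image $T(\{\|f\|_b\le 1\})$.

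For the first assertion, I would first show $T\colon (V',\|\cdot\|_b)\to (V,\|\cdot\|_a)$ is a compact operator. Given a sequence $\{f_n\}\subset V'$ with $\|f_n\|_b\le 1$, the identity $\|Tf_n\|_a^2 = a(Tf_n,Tf_n) = b(f_n,Tf_n)$ combined with the continuous embedding $\|\cdot\|_b\lesssim\|\cdot\|_a$ implied by the relative-compactness hypothesis yields $\|Tf_n\|_a\lesssim 1$. The compact embedding then supplies a subsequence $\{Tf_{n_k}\}$ that is Cauchy in $\|\cdot\|_b$, and the identity
\begin{equation*}
\|Tf_{n_k}-Tf_{n_l}\|_a^2 = b(f_{n_k}-f_{n_l},Tf_{n_k}-Tf_{n_l}) \le \|f_{n_k}-f_{n_l}\|_b\,\|Tf_{n_k}-Tf_{n_l}\|_b
\end{equation*}
with the $\|\cdot\|_b$-bound on $f_n$ shows the same subsequence is Cauchy in $\|\cdot\|_a$. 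Thus $B:=T(\{\|f\|_b\le 1\})$ is precompact in $(V,\|\cdot\|_a)$. Now apply \eqref{Approximation_Property} via an $\varepsilon$-net argument: for any $\varepsilon>0$ choose a finite $\varepsilon$-net $\{w_1,\ldots,w_N\}\subset B$, then pick $h$ small enough that $\|w_i-P_hw_i\|_a<\varepsilon$ for every $i$. Since $\|I-P_h\|_a\le 1$ by \eqref{Projection_Optimal}, any $w\in B$ obeys $\|w-P_hw\|_a\le 2\|w-w_i\|_a+\|w_i-P_hw_i\|_a\le 3\varepsilon$ with a nearest $w_i$, which is exactly $\eta_a(h)\le 3\varepsilon$.

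For the second assertion, I would run the standard Aubin--Nitsche duality argument. Setting $e:=w-P_hw$ and defining the functional $f_e\in V'$ by $f_e(v)=b(e,v)$, one has $\|f_e\|_b=\|e\|_b$ by the definition of the dual norm. Let $\phi:=Tf_e\in V$, so that $a(\phi,v)=b(e,v)$ for every $v\in V$. Using Galerkin orthogonality $a(P_h\phi,e)=0$ from \eqref{Projection_Problem},
\begin{equation*}
\|e\|_b^2 = b(e,e) = a(\phi,e) = a(\phi-P_h\phi,e) \le \|\phi-P_h\phi\|_a\,\|e\|_a.
\end{equation*}
Because $\phi=Tf_e$ and $P_h\phi$ is the $a$-best approximation, the definition of $\eta_a(h)$ yields $\|\phi-P_h\phi\|_a\le \eta_a(h)\|f_e\|_b=\eta_a(h)\|e\|_b$. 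Substituting and dividing by $\|e\|_b$ (the case $e=0$ is trivial) gives \eqref{Negative_norm_Error}.

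The main obstacle I anticipate is the compactness step: one must make precise the (standard but mildly abusive) identification that lets $f\in V'$ pair with $v\in V$ through $b(\cdot,\cdot)$ and lets $\|f\|_b$ denote the dual norm of $f$ with respect to $\|\cdot\|_b$. Once this identification is set and the compact embedding from $(V,\|\cdot\|_a)$ into the $\|\cdot\|_b$-completion is invoked, the remaining manipulations are routine; in particular, the duality step for \eqref{Negative_norm_Error} is a textbook application of \eqref{Projection_Problem} together with the definition of $\eta_a(h)$.
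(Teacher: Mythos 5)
Your argument is correct, and it is essentially the standard one: the paper itself gives no proof of this lemma but only cites Babu\v{s}ka--Osborn, where the first assertion is obtained exactly as you do (compactness of $T$ from the relative-compactness hypothesis plus an $\varepsilon$-net argument upgrading \eqref{Approximation_Property} to uniformity over $T$ of the unit $\|\cdot\|_b$-ball), and the second is the Aubin--Nitsche duality argument you give. Your handling of the notational identification of $f\in V'$ with the $b$-pairing and of $\|f\|_b$ as the corresponding dual norm is the right way to read the paper's (and the reference's) slightly abusive definition of $\eta_a(h)$.
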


In our methodology description, we are concerned with the following
general eigenvalue problem:

Find $(\lambda, u )\in \mathcal{R}\times V$ such that $a(u,u)=1$ and
\begin{eqnarray}
a(u,v)&=&\lambda b(u,v),\quad \forall v\in V. \label{weak_problem}
\end{eqnarray}

For the eigenvalue $\lambda$, there exists the following Rayleigh
quotient expression (see, e.g., \cite{Babuska2,BabuskaOsborn,XuZhou})
\begin{eqnarray}\label{Rayleigh_quotient}
\lambda=\frac{a(u,u)}{b(u,u)}.
\end{eqnarray}
From \cite{BabuskaOsborn,Chatelin}, we know the eigenvalue problem
(\ref{weak_problem}) has an eigenvalue sequence $\{\lambda_j \}:$
$$0<\lambda_1\leq \lambda_2\leq\cdots\leq\lambda_k\leq\cdots,\ \ \
\lim_{k\rightarrow\infty}\lambda_k=\infty,$$ and the associated
eigenfunctions
$$u_1,u_2,\cdots,u_k,\cdots,$$
where $b(u_i,u_j)=\delta_{ij}$. In the sequence $\{\lambda_j\}$, the
$\{\lambda_j\}$ are repeated according to their geometric multiplicity.

Let $M(\lambda_i)$ denote the eigenfunction space corresponding to the
eigenvalue $\lambda_i$ which is defined by
\begin{eqnarray}
M(\lambda_i)&=&\big\{w\in V: w\ {\rm is\ an\ eigenfunction\ of\
(\ref{weak_problem})
\ corresponding} \nonumber\\
&&\ \ \ \ {\rm to}\ \lambda_i\big\}. 
\end{eqnarray}
From \cite{Babuska2,BabuskaOsborn}, each eigenvalue $\lambda_i$ can
be defined as follows
\begin{eqnarray*}
\lambda_i&=&\inf_{v\in V\atop v\perp M(\lambda_j)\ {\rm for}\
\lambda_j<\lambda_i} \frac{a(v,v)}{b(v,v)}.
\end{eqnarray*}

Now, let us define the finite element approximations of the problem
(\ref{weak_problem}). First we generate a shape-regular
decomposition of the computing domain $\Omega\subset \mathcal{R}^d\
(d=2,3)$ into triangles or rectangles for $d=2$ (tetrahedrons or
hexahedrons for $d=3$). The diameter of a cell $K\in\mathcal{T}_h$
is denoted by $h_K$. The mesh diameter $h$ describes the maximum
diameter of all cells $K\in\mathcal{T}_h$. Based on the mesh
$\mathcal{T}_h$, we can construct a finite element space denoted by
$V_h\subset V$.

Then we define the approximation for the eigenpair $({\lambda},u)$ of
(\ref{weak_problem}) by the finite element method as:

Find $(\bar{\lambda}_h, \bar{u}_h)\in \mathcal{R}\times V_h$ such that
 $a(\bar{u}_h,\bar{u}_h)=1$ and
\begin{eqnarray}\label{weak_problem_Discrete}
a(\bar{u}_h,v_h)&=&\bar{\lambda}_hb(\bar{u}_h,v_h),\quad\ \  \ \forall v_h\in V_h.
\end{eqnarray}

From (\ref{weak_problem_Discrete}), we know the following
Rayleigh quotient expression for $\bar{\lambda}_h$ holds
(see, e.g., \cite{Babuska2,BabuskaOsborn,XuZhou})
\begin{eqnarray}\label{eigenvalue_Rayleigh}
\bar{\lambda}_h &=&\frac{a(\bar{u}_h,\bar{u}_h)}{b(\bar{u}_h,\bar{u}_h)}.
\end{eqnarray}
Similarly, we know from \cite{BabuskaOsborn,Chatelin} the eigenvalue
problem (\ref{weak_problem_Discrete}) has eigenvalues
$$0<\bar{\lambda}_{1,h}\leq \bar{\lambda}_{2,h}\leq\cdots\leq \bar{\lambda}_{k,h}\leq\cdots\leq \bar{\lambda}_{N_h,h},$$
and the corresponding eigenfunctions
$$\bar{u}_{1,h}, \bar{u}_{2,h},\cdots, \bar{u}_{k,h}, \cdots, \bar{u}_{N_h,h},$$
where $b(\bar{u}_{i,h},\bar{u}_{j,h})=\delta_{ij}, 1\leq i,j\leq N_h$ ($N_h$ is
the dimension of the finite element space $V_h$).

From the minimum-maximum principle (see, e.g., \cite{Babuska2,BabuskaOsborn}),
the following upper bound result holds
$${\lambda}_i\leq \bar{\lambda}_{i,h}, \ \ \ i=1,2,\cdots, N_h.$$
Similarly, let $M_h(\lambda_i)$ denote the approximate eigenfunction space  corresponding to the
eigenvalue $\lambda_i$ which is defined by
\begin{eqnarray}
M_h(\lambda_i)&=&\big\{w_h\in V_h: w_h\ {\rm is\ an\ eigenfunction\ of\
(\ref{weak_problem_Discrete})} \nonumber\\
&&\ \ \ \ {\rm corresponding\ to}\ \lambda_i\big\}. 
\end{eqnarray}
From \cite{Babuska2,BabuskaOsborn}, each eigenvalue $\lambda_{i,h}$ can be defined as follows
\begin{eqnarray}\label{Eigenvalue_h_definition}
\lambda_{i,h}&=&\inf_{v_h\in V_h\atop v_h\perp M_h(\lambda_j)\ {\rm for}\ \lambda_j<\lambda_i} \frac{a(v_h,v_h)}{b(v_h,v_h)}.
\end{eqnarray}

Then we define
\begin{eqnarray}
\delta_h(\lambda_i)=\sup_{w\in M(\lambda_i),\|w\|_a=1}\inf_{v_h\in
V_h}\|w-v_h\|_a.
\end{eqnarray}
For the analysis in this paper, we introduce the spectral projection as follows \cite{BabuskaOsborn}:
\begin{eqnarray}\label{Spectral_Projection_Operator}
\Pi_{1,h}&=&\frac{1}{2\pi {\rm i}}\int_{\Gamma}\big(z-P_hT\big)^{-1}dz,
\end{eqnarray}
where $\Gamma$ is a Jordan curve in $\mathcal{C}$ that encloses the eigenvalue $\lambda_1$
and no other eigenvalues. 

For the eigenpair approximations by finite element method, there
exist the following error estimates.
\begin{proposition}(\cite[Lemma 3.7, (3.29b)]{Babuska2}, \cite[P. 699]{BabuskaOsborn} and
\cite{Chatelin})\label{Error_estimate_Proposition}

\noindent(i) For any eigenfunction approximation $\bar u_{i,h}$ of
(\ref{weak_problem_Discrete}) $(i = 1, 2, \cdots, N_h)$, there is an
eigenfunction $u_i$ of (\ref{weak_problem}) corresponding to
$\lambda_i$ such that $\|u_i\|_a = 1$ and
\begin{eqnarray}\label{Eigenfunction_Error}
\|u_i-\bar{u}_{i,h}\|_a&\leq& C\delta_h(\lambda_i).
\end{eqnarray}
Furthermore,
\begin{eqnarray}\label{Eigenfunction_Error_Nagative}
\|u_i- \bar{u}_{i,h}\|_{b} \leq C\eta_a(h)\|u_i - \bar u_{i,h}\|_a.
\end{eqnarray}
(ii) For each eigenvalue, we have
\begin{eqnarray}
\lambda_i \leq \bar{\lambda}_{i,h}\leq \lambda_i +
C\delta_h^2(\lambda_i).
\end{eqnarray}
Here and hereafter $C$ is some constant depending on $\lambda_i$ but independent of  the mesh size $h$.
\end{proposition}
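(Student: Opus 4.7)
The plan is to work entirely in the operator-theoretic framework of Babu\v{s}ka--Osborn. Introduce the solution operator $T\colon V'\to V$ from the excerpt and its Galerkin approximation $T_h := P_hT$. Because $\|\cdot\|$ is relatively compact with respect to $\|\cdot\|_b$, the operator $T$ is compact on $V$. The identity $(T - T_h)f = (I - P_h)Tf$ together with Lemma~\ref{Negative_norm_estimate_Lemma} yields the uniform operator convergence $\sup_{\|f\|_b\le 1}\|(T - T_h)f\|_a \le \eta_a(h)\to 0$. This convergence is the engine driving the eigenvalue theory: $(\lambda_i,u_i)$ corresponds to the eigenpair $(1/\lambda_i, u_i)$ of $T$, analogously for $T_h$, so for $h$ sufficiently small the spectral projector $\Pi_{1,h}$ in (\ref{Spectral_Projection_Operator}) is uniformly bounded and the Jordan curve $\Gamma$ isolates precisely the cluster of discrete eigenvalues converging to $\lambda_i$.

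For assertion (i), given a discrete eigenfunction $\bar u_{i,h}$ I would take the continuous comparison $u_i$ to be the $\|\cdot\|_a$-renormalized image of $\bar u_{i,h}$ under the spectral projector of $T$ associated with $\lambda_i$. Combining $u_i = \lambda_iTu_i$ and $\bar u_{i,h} = \bar\lambda_{i,h}T_h\bar u_{i,h}$ gives
\[
u_i - \bar u_{i,h} \;=\; (T - T_h)(\lambda_iu_i) + T_h\bigl(\lambda_iu_i - \bar\lambda_{i,h}\bar u_{i,h}\bigr),
\]
and iterating this identity through the spectral projector reduces the estimate of $\|u_i - \bar u_{i,h}\|_a$ to controlling $\sup\{\|(I - P_h)w\|_a : w\in M(\lambda_i),\,\|w\|_a=1\}$, which is by definition $\delta_h(\lambda_i)$. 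The weak-norm refinement (\ref{Eigenfunction_Error_Nagative}) is then a standard Aubin--Nitsche duality: writing $\|u_i - \bar u_{i,h}\|_b^2 = a(T(u_i - \bar u_{i,h}),\,u_i - \bar u_{i,h})$, splitting the dual solution into its $P_h$-projection and its complement, and invoking Lemma~\ref{Negative_norm_estimate_Lemma} on the complement, pulls out the extra factor $\eta_a(h)$ (the Galerkin defect $a(u_i - \bar u_{i,h}, v_h) = \lambda_ib(u_i,v_h) - \bar\lambda_{i,h}b(\bar u_{i,h},v_h)$ contributes only higher-order terms).

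For assertion (ii), the lower bound $\lambda_i \le \bar\lambda_{i,h}$ is immediate from comparing the continuous min-max principle with the discrete characterization (\ref{Eigenvalue_h_definition}), since $V_h\subset V$ makes the discrete infimum one over a smaller admissible set. For the upper bound I would invoke the classical algebraic identity
\[
\bar\lambda_{i,h} - \lambda_i \;=\; \frac{a(u_i - \bar u_{i,h},\,u_i - \bar u_{i,h}) - \lambda_i\,b(u_i - \bar u_{i,h},\,u_i - \bar u_{i,h})}{b(\bar u_{i,h},\bar u_{i,h})},
\]
obtained by expanding $a(\bar u_{i,h},\bar u_{i,h}) - \lambda_ib(\bar u_{i,h},\bar u_{i,h})$ around $u_i$ and using $a(u_i,v)=\lambda_ib(u_i,v)$. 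Inserting (\ref{Eigenfunction_Error}) bounds the $a$-term in the numerator by $C\delta_h^2(\lambda_i)$; the $b$-term is $O(\eta_a^2(h)\delta_h^2(\lambda_i))$ by (\ref{Eigenfunction_Error_Nagative}) and is therefore strictly higher order. The denominator stays bounded below since $b(u_i,u_i)=1/\lambda_i$ and $\bar u_{i,h}\to u_i$ in $\|\cdot\|_b$. The principal obstacle throughout is the spectral-perturbation bookkeeping required to show that the cluster of discrete eigenvalues near $\lambda_i$ has the correct total multiplicity, which legitimizes the pairing of each $\bar u_{i,h}$ with a specific $u_i\in M(\lambda_i)$ satisfying $\|u_i\|_a=1$ and keeps all constants uniform in $h$.
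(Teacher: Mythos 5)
The paper does not prove this proposition at all---it is quoted directly from the cited references (Babu\v{s}ka--Osborn and Chatelin)---and your sketch is a faithful reconstruction of exactly that standard spectral-approximation argument: uniform convergence of $T_h=P_hT$ to the compact operator $T$, pairing the discrete eigenfunction with its spectral projection into $M(\lambda_i)$, Aubin--Nitsche duality for the $\|\cdot\|_b$ bound, and the min-max principle plus the Rayleigh-quotient identity (Lemma~\ref{Rayleigh_Quotient_error_theorem}) for the eigenvalue bounds. So your proposal is correct and follows essentially the same route as the sources the paper relies on; the only work left implicit is the routine perturbation bookkeeping you already flag.
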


\section{One shifted-inverse power iteration step with multigrid method}
In this section, we present a type of one shifted-inverse power iteration step to improve the
accuracy of the given eigenvalue and eigenfunction approximations.
This iteration method only contains solving auxiliary boundary value problems
in the finer finite element space.

To analyze our method, we introduce the error expansion of the
eigenvalue by the Rayleigh quotient formula which comes from
\cite{Babuska2,BabuskaOsborn,LinYan,XuZhou}.
\begin{lemma}(\cite{Babuska2,BabuskaOsborn,LinYan,XuZhou})\label{Rayleigh_Quotient_error_theorem}
Assume $(\lambda,u)$ is a true solution of the eigenvalue problem
(\ref{weak_problem}) and  $0\neq \psi\in V$. Let us define
\begin{eqnarray}\label{rayleighw}
\widehat{\lambda}=\frac{a(\psi,\psi)}{b(\psi,\psi)}.
\end{eqnarray}
Then we have
\begin{eqnarray}\label{rayexpan}
\widehat{\lambda}-\lambda
&=&\frac{a(u-\psi,u-\psi)}{b(\psi,\psi)}-\lambda
\frac{b(u-\psi,u-\psi)}{b(\psi,\psi)}.
\end{eqnarray}
\end{lemma}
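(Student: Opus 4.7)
The plan is to derive the identity by direct algebraic expansion, exploiting the eigenvalue equation $a(u,v)=\lambda b(u,v)$ to cancel the cross terms.

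First I would move the whole right-hand side to a common denominator and instead prove the equivalent clean identity
\begin{equation*}
a(\psi,\psi) - \lambda\, b(\psi,\psi) \;=\; a(u-\psi,u-\psi) - \lambda\, b(u-\psi,u-\psi),
\end{equation*}
since dividing by $b(\psi,\psi)\neq 0$ then yields (\ref{rayexpan}) by the definition (\ref{rayleighw}) of $\widehat{\lambda}$. This reformulation is convenient because both bilinear forms are symmetric, so the expansion of $a(u-\psi,u-\psi)$ and $b(u-\psi,u-\psi)$ produces exactly the same pattern of cross terms.

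Next I would expand the right-hand side using symmetry of $a(\cdot,\cdot)$ and $b(\cdot,\cdot)$:
\begin{equation*}
a(u-\psi,u-\psi) - \lambda\, b(u-\psi,u-\psi) = \bigl[a(u,u) - \lambda\, b(u,u)\bigr] - 2\bigl[a(u,\psi)-\lambda\, b(u,\psi)\bigr] + \bigl[a(\psi,\psi)-\lambda\, b(\psi,\psi)\bigr].
\end{equation*}
The first bracket vanishes by testing (\ref{weak_problem}) with $v=u$, and the second bracket vanishes by testing (\ref{weak_problem}) with $v=\psi\in V$. This leaves precisely $a(\psi,\psi)-\lambda\, b(\psi,\psi)$, which is the reformulated identity.

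Finally I would divide through by $b(\psi,\psi)$, which is strictly positive because $\psi\neq 0$ and $b$ is positive on nonzero vectors, and recognize the resulting left side as $\widehat{\lambda}-\lambda$ by (\ref{rayleighw}). There is no real obstacle here: the proof is essentially a two-line calculation whose only subtlety is remembering that the eigenvalue equation must be applied twice (once with test function $u$, once with test function $\psi$) to kill all the cross terms simultaneously. No smoothness, compactness, or discretization assumption is needed, which is why this identity underlies the quadratic convergence of Rayleigh-quotient-based eigenvalue estimates used later in the paper.
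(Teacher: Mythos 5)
Your proof is correct: the expansion of $a(u-\psi,u-\psi)-\lambda\,b(u-\psi,u-\psi)$, killing the $a(u,u)-\lambda b(u,u)$ and cross terms by testing (\ref{weak_problem}) with $v=u$ and $v=\psi$, and dividing by $b(\psi,\psi)>0$ is exactly the standard argument. The paper itself gives no proof of this lemma, only citations to \cite{Babuska2,BabuskaOsborn,LinYan,XuZhou}, and your two-line computation matches the derivation found in those references.
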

For simplicity, here we only state the numerical method for the first
eigenvalue $\lambda_1$. Assume we have obtained an eigenpair approximation
$(\lambda_{1,h_k},u_{1,h_k})\in\mathcal{R}\times V_{h_k}$ with $\|u_{1,h_k}\|_a=1$.
Now we introduce a type of iteration step to improve the accuracy of the
current eigenpair approximation $(\lambda_{1,h_k},u_{1,h_k})$. Let
$V_{h_{k+1}}\subset V$ be a finer finite element space such that
$V_{h_k}\subset V_{h_{k+1}}$. Based on this finer finite element space,
we define the following one shifted-inverse power iteration step.

\begin{algorithm}\label{Correction_Step}
One Shifted-inverse Power Iteration Step

\begin{enumerate}
\item Solve the following boundary value problem:

Find $\widehat{u}_{1,h_{k+1}}\in V_{h_{k+1}}$ such that for any $v_{h_{k+1}}\in V_{h_{k+1}}$
\begin{eqnarray}\label{aux_problem}
a(\widehat{u}_{1,h_{k+1}},v_{h_{k+1}})-\alpha_{1,k+1}b(\widehat{u}_{1,h_{k+1}},v_{h_{k+1}})
=b(u_{1,h_k},v_{h_{k+1}}).
\end{eqnarray}
\item Do the normalization for $\widehat{u}_{1,h_{k+1}}$ as
\begin{eqnarray}\label{Normalization}
u_{1,h_{k+1}}=\frac{\widehat{u}_{1,h_{k+1}}}{\|\widehat{u}_{1,h_{k+1}}\|_a}
\end{eqnarray}
and compute the Rayleigh quotient for $u_{1,h_{k+1}}$
\begin{eqnarray}\label{Rayleigh_Quotient_u_h_k_1}
\lambda_{1,h_{k+1}}&=&\frac{a(u_{1,h_{k+1}},u_{1,h_{k+1}})}{b(u_{1,h_{k+1}},u_{1,h_{k+1}})}.
\end{eqnarray}
\end{enumerate}
Then we obtain a new eigenpair approximation $(\lambda_{1,h_{k+1}},u_{1,h_{k+1}})\in \mathcal{R}\times V_{h_{k+1}}$.
Summarize the above two steps into
\begin{eqnarray*}
(\lambda_{1,h_{k+1}},u_{1,h_{k+1}})={\it
Correction}(\alpha_{1,k+1},u_{1,h_k},V_{h_{k+1}}).
\end{eqnarray*}
\end{algorithm}
\begin{theorem}\label{Error_Estimate_One_Correction_Theorem}
When $\alpha_{1,k+1}\neq \bar{\lambda}_{1,h_{k+1}}$ and $\alpha_{1,k+1} < \bar{\lambda}_{2,h_{k+1}}$,
after one correction step, the resultant approximation
$u_{1,h_{k+1}}\in V_{h_{k+1}}$
has the following error estimates
\begin{eqnarray}\label{Error_k_k+1_1}
\|u_{1,h_{k+1}}-\Pi_{1,h_{k+1}}u_{1,h_{k+1}}\|_a
&\leq& \frac{\theta_{1,k+1}\|u_{1,h_k}-\Pi_{1,h_{k+1}}u_{1,h_k}\|_a}{1-(1+\theta_{1,k+1})\|u_{1,h_k}-\Pi_{1,h_{k+1}}u_{1,h_k}\|_a},
\end{eqnarray}
where
\begin{eqnarray}\label{Definition_Theta}
\theta_{1,k+1}&=&\frac{|\bar{\lambda}_{1,h_{k+1}}-\alpha_{1,k+1}|}{\bar{\lambda}_{2,h_{k+1}}-\alpha_{1,k+1}}.
\end{eqnarray}
\end{theorem}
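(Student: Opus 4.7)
The proof proceeds by diagonalizing the shifted-inverse solve in the discrete eigenbasis of $V_{h_{k+1}}$ and then exploiting the spectral gap at $\bar{\lambda}_{1,h_{k+1}}$. Let $\{\bar{u}_{j,h_{k+1}}\}_{j=1}^{N_{h_{k+1}}}$ denote the $b$-orthonormal discrete eigenbasis, so that $a(\bar{u}_{i,h_{k+1}},\bar{u}_{j,h_{k+1}})=\bar{\lambda}_{i,h_{k+1}}\delta_{ij}$ and $\Pi_{1,h_{k+1}}$ acts on $V_{h_{k+1}}$ as the $a$-orthogonal projection onto $M_{h_{k+1}}(\lambda_1)$. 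Since $u_{1,h_k}\in V_{h_k}\subset V_{h_{k+1}}$, I expand $u_{1,h_k}=\sum_j d_j\bar{u}_{j,h_{k+1}}$ and test (\ref{aux_problem}) against each basis vector to get the coefficient-wise solution
\begin{eqnarray*}
\widehat{u}_{1,h_{k+1}}=\sum_j\frac{d_j}{\bar{\lambda}_{j,h_{k+1}}-\alpha_{1,k+1}}\bar{u}_{j,h_{k+1}},
\end{eqnarray*}
which is well-defined because $\alpha_{1,k+1}\neq\bar{\lambda}_{1,h_{k+1}}$ and $\alpha_{1,k+1}<\bar{\lambda}_{2,h_{k+1}}\leq\bar{\lambda}_{j,h_{k+1}}$ for $j\geq 2$.

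The key idea is a rescaling. Set $\widetilde{u}:=(\bar{\lambda}_{1,h_{k+1}}-\alpha_{1,k+1})\widehat{u}_{1,h_{k+1}}$; then $\widetilde{u}$ is collinear with $u_{1,h_{k+1}}$ and is tailored so that $\Pi_{1,h_{k+1}}\widetilde{u}=\Pi_{1,h_{k+1}}u_{1,h_k}$ holds exactly. On the orthogonal complement, the spectral gap $|\bar{\lambda}_{j,h_{k+1}}-\alpha_{1,k+1}|\geq\bar{\lambda}_{2,h_{k+1}}-\alpha_{1,k+1}$ valid for all $j\geq 2$ yields
\begin{eqnarray*}
\|(I-\Pi_{1,h_{k+1}})\widetilde{u}\|_a\leq\theta_{1,k+1}\|(I-\Pi_{1,h_{k+1}})u_{1,h_k}\|_a=\theta_{1,k+1}\eta,
\end{eqnarray*}
where I write $\eta:=\|u_{1,h_k}-\Pi_{1,h_{k+1}}u_{1,h_k}\|_a$.

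Since the $\Pi_{1,h_{k+1}}$-components of $\widetilde{u}$ and $u_{1,h_k}$ coincide, $\widetilde{u}-u_{1,h_k}$ lies entirely in the orthogonal complement, and the triangle inequality gives $\|\widetilde{u}-u_{1,h_k}\|_a\leq\theta_{1,k+1}\eta+\eta=(1+\theta_{1,k+1})\eta$. Combined with $\|u_{1,h_k}\|_a=1$, the reverse triangle inequality then produces the lower bound $\|\widetilde{u}\|_a\geq 1-(1+\theta_{1,k+1})\eta$. Finally, $u_{1,h_{k+1}}$ is the $a$-normalization of $\widetilde{u}$ (up to sign), so $\Pi_{1,h_{k+1}}u_{1,h_{k+1}}=\Pi_{1,h_{k+1}}u_{1,h_k}/\|\widetilde{u}\|_a$ and $u_{1,h_{k+1}}-\Pi_{1,h_{k+1}}u_{1,h_{k+1}}=(I-\Pi_{1,h_{k+1}})\widetilde{u}/\|\widetilde{u}\|_a$; dividing the numerator and denominator estimates gives exactly (\ref{Error_k_k+1_1}).

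The main conceptual step is identifying the right rescaling. A more naive analysis that normalizes $\widehat{u}_{1,h_{k+1}}$ directly in the $a$-norm and lower-bounds $\|\widehat{u}_{1,h_{k+1}}\|_a$ by $\|\Pi_{1,h_{k+1}}\widehat{u}_{1,h_{k+1}}\|_a$ ends up with a denominator involving $\|\Pi_{1,h_{k+1}}u_{1,h_k}\|_a=\sqrt{1-\eta^2}$ — actually slightly tighter but in a less usable analytic form. Multiplying $\widehat{u}_{1,h_{k+1}}$ by $\bar{\lambda}_{1,h_{k+1}}-\alpha_{1,k+1}$ so that the $\Pi_{1,h_{k+1}}$-part exactly matches that of $u_{1,h_k}$ is the structural step that produces the clean denominator $1-(1+\theta_{1,k+1})\eta$ in the theorem; once that is set up, only two triangle inequalities and the spectral gap estimate are needed.
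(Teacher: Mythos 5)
Your proposal is correct and follows essentially the same route as the paper: the paper also rescales $\widehat{u}_{1,h_{k+1}}$ by $\gamma=\bar{\lambda}_{1,h_{k+1}}-\alpha_{1,k+1}$ so that the component in $M_{h_{k+1}}(\lambda_1)$ coincides with that of $u_{1,h_k}$, bounds the orthogonal-complement part by $\theta_{1,k+1}\|u_{1,h_k}-\Pi_{1,h_{k+1}}u_{1,h_k}\|_a$, lower-bounds $\|\widetilde{u}_{1,h_{k+1}}\|_a$ by $1-(1+\theta_{1,k+1})\|u_{1,h_k}-\Pi_{1,h_{k+1}}u_{1,h_k}\|_a$ via the triangle inequality, and then normalizes. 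The only (cosmetic) difference is that you obtain the gap estimate by explicit diagonalization in the discrete eigenbasis, while the paper gets the same bound from the min--max characterization (\ref{Eigenvalue_h_definition}) of $\bar{\lambda}_{2,h_{k+1}}$ on the complement of $M_{h_{k+1}}(\lambda_1)$ together with a Cauchy--Schwarz argument in $b(\cdot,\cdot)$.
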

\begin{proof}
Let us define $\gamma=\bar{\lambda}_{1,h_{k+1}}-\alpha_{1,k+1}$, $\widetilde{u}_{1,h_{k+1}}=\gamma \widehat{u}_{1,h_{k+1}}$
and $w_{h_{k+1}}:=\widetilde{u}_{1,h_{k+1}}-\Pi_{1,h_{k+1}}u_{1,h_k}$.
Then we know
\begin{eqnarray}\label{aux_problem_2}
a(\widetilde{u}_{1,h_{k+1}},v_{h_{k+1}})-\alpha_{1,k+1}b(\widetilde{u}_{1,h_{k+1}},v_{h_{k+1}})
=\gamma b(u_{1,h_{k}},v_{h_{k+1}}),\ \forall v_{h_{k+1}}\in V_{h_{k+1}}.
\end{eqnarray}
From $u_{1,h_k}-\Pi_{1,h_{k+1}}u_{1,h_k}\perp M_{h_{k+1}}(\lambda_1)$, $\Pi_{1,h_{k+1}}u_{1,h_k}\in M_{h_{k+1}}(\lambda_1)$,
(\ref{aux_problem_2}) and for any $v_{h_{k+1}}\in V_{h_{k+1}}$
\begin{eqnarray}\label{Eigenvalue_Problem_Direct}
a(\Pi_{1,h_{k+1}}u_{1,h_k},v_{h_{k+1}})-\alpha_{1,k+1}b(\Pi_{1,h_{k+1}}u_{1,h_k},v_{h_{k+1}})
=\gamma b(\Pi_{1,h_{k+1}}u_{1,h_k},v_{h_{k+1}}),
\end{eqnarray}
we have $w_{h_{k+1}}\perp M_{h_{k+1}}(\lambda_1)$ and $a(w_{h_{k+1}}, \Pi_{1,h_{k+1}}u_{1,h_k})=0$.

From (\ref{Eigenvalue_h_definition}), (\ref{aux_problem_2})
and (\ref{Eigenvalue_Problem_Direct}),
the following estimates hold
\begin{eqnarray*}
\|w_{h_{k+1}}\|_a^2&=
& \alpha_{1,k+1}b(w_{h_{k+1}},w_{h_{k+1}})
+ \gamma b(u_{1,h_k}-\Pi_{1,h_{k+1}}u_{1,h_k},w_{h_{k+1}})\nonumber\\
&\leq&\frac{\alpha_{1,k+1}}{\bar{\lambda}_{2,h_{k+1}}}\|w_{h_{k+1}}\|_a^2
+|\gamma| \|u_{1,h_k}-\Pi_{1,h_{k+1}}u_{1,h_k}\|_b\|w_{h_{k+1}}\|_b\nonumber\\
&\leq&\frac{\alpha_{1,k+1}}{\bar{\lambda}_{2,h_{k+1}}}\|w_{h_{k+1}}\|_a^2
+\frac{|\gamma|}{\bar{\lambda}_{2,h_{k+1}}}\|u_{1,h_k}-\Pi_{1,h_{k+1}}u_{1,h_k}\|_a\|w_{h_{k+1}}\|_a.
\end{eqnarray*}
Then we have
\begin{eqnarray}\label{Estimate_u_tilde_u_h_k+1}
\|w_{h_{k+1}}\|_a &\leq
&\frac{\bar{|\lambda}_{1,h_{k+1}}-\alpha_{1,k+1}|}{\bar{\lambda}_{2,h_{k+1}}-\alpha_{1,k+1}}
\|u_{1,h_k}-\Pi_{1,h_{k+1}}u_{1,h_k}\|_a.
\end{eqnarray}
From (\ref{Estimate_u_tilde_u_h_k+1}), $\|u_{1,h_k}\|_a=1$ and
 $\widetilde{u}_{1,h_{k+1}}=u_{1,h_k}+\Pi_{1,h_{k+1}}u_{1,h_k}-u_{1,h_k}+\widetilde{u}_{1,h_{k+1}}-\Pi_{1,h_{k+1}}u_{1,h_k}$,
  the following estimates hold
\begin{eqnarray}
\|\widetilde{u}_{1,h_{k+1}}\|_a&\geq& \|u_{1,h_k}\|_a-\|\Pi_{1,h_{k+1}}u_{1,h_k}-u_{1,h_k}\|_a-\|\widetilde{u}_{1,h_{k+1}}-\Pi_{1,h_{k+1}}u_{1,h_k}\|_a\nonumber\\
&\geq& 1-(1+\theta_{1,k+1})\|u_{1,h_k}-\Pi_{1,h_{k+1}}u_{1,h_k}\|_a.
\end{eqnarray}
Then we have
\begin{eqnarray}
\frac{1}{\|\widetilde{u}_{1,h_{k+1}}\|_a}&\leq& \frac{1}{1-(1+\theta_{1,k+1})\|u_{1,h_k}-\Pi_{1,h_{k+1}}u_{1,h_k}\|_a}.
\end{eqnarray}
Since $u_{1,h_{k+1}}=\widetilde{u}_{1,h_{k+1}}/\|\widetilde{u}_{1,h_{k+1}}\|_a$ and
$w_{h_{k+1}}\perp M_{h_{k+1}}(\lambda_1)$, the following estimates hold
\begin{eqnarray}
\|u_{1,h_{k+1}}-\Pi_{1,h_{k+1}}u_{1,h_{k+1}}\|_a&=&
\frac{\|\widetilde{u}_{1,h_{k+1}}-\Pi_{1,h_{k+1}}u_{1,h_k}\|_a}{\|\widetilde{u}_{1,h_{k+1}}\|_a}\nonumber\\
&\leq& \frac{\theta_{1,k+1}\|u_{1,h_k}-\Pi_{1,h_{k+1}}u_{1,h_k}\|_a}{1-(1+\theta_{1,k+1})\|u_{1,h_k}-\Pi_{1,h_{k+1}}u_{1,h_k}\|_a},
\end{eqnarray}
which means we have obtained the desired result (\ref{Error_k_k+1_1}) and the proof is complete..
\end{proof}
\begin{remark}
Let us discuss two choices of $\alpha_{1,k+1}$.
\begin{enumerate}
\item If $\alpha_{1,{k+1}}=0$, i.e. we do not use shift, actually Algorithm \ref{Correction_Step}
is just to repeat the well-known two grid method \cite{XuZhou}. From (\ref{Error_k_k+1_1}), we have
\begin{equation}
\|u_{1,{h_{k+1}}}-\Pi_{1,h_{k+1}}u_{1,{h_{k+1}}} \|_a\leq \kappa_{1,k+1}\|u_{1,{h_{k}}}-\Pi_{1,h_{k+1}}u_{1,h_k}\|_a,
\end{equation}
where
\begin{eqnarray*}
\kappa_{1,k+1}&=&\frac{\frac{\bar\lambda_{1,h_{k+1}}}{\bar\lambda_{2,h_{k+1}}}}
{1-\Big(1+\frac{\bar\lambda_{1,h_{k+1}}}{\bar\lambda_{2,h_{k+1}}}\Big)\|u_{1,{h_{k}}}-\Pi_{1,h_{k+1}}u_{1,h_k}\|_a}.
\end{eqnarray*}
This indicates that the two grid step has linear convergence speed rate when
$\|u_{1,{h_{k}}}-\Pi_{1,h_{k+1}}u_{1,h_k}\|_a$ is small enough.
But if the gap between $\bar\lambda_{1,h_{k+1}}$ and $\bar\lambda_{2,h_{k+1}}$ is small, the convergence is slow.

\item 
From Lemma \ref{Rayleigh_Quotient_error_theorem}, the following estimate holds
\begin{eqnarray*}
&&\lambda_{1,h_{k}}-\bar\lambda_{1,h_{k+1}}\\
&=&\frac{a(\Pi_{1,h_{k+1}}u_{1,h_k}-u_{1,h_{k}},\Pi_{1,h_{k+1}}u_{1,h_k}-u_{1,h_{k}})}{b(u_{1,h_{k}},u_{1,h_{k}})}\nonumber\\
&&\ \ \ \ -\bar\lambda_{1,h_{k+1}}\frac{b(\Pi_{1,h_{k+1}}u_{1,h_k}-u_{1,h_{k}},\Pi_{1,h_{k+1}}u_{1,h_k}-u_{1,h_{k}})}{b(u_{1,h_{k}},u_{1,h_{k}})}\\
&\lesssim &\|u_{1,h_{k}}-\Pi_{1,h_{k+1}}u_{1,h_k}\|^2_a.
\end{eqnarray*}
If $\alpha_{1,h_{k+1}}=\lambda_{1,h_{k}}$, from (\ref{Error_k_k+1_1}), we have
\begin{equation}
\|u_{1,h_{k+1}}-\Pi_{1,h_{k+1}} u_{1,h_{k+1}}\|_a\lesssim\|u_{1,h_{k}}-\Pi_{1,h_{k+1}}u_{1,h_k}\|^3_a,
\end{equation}
which means that the Rayleigh quotient iteration has cubic convergence rate.
\end{enumerate}
\end{remark}

The suitable choice for $\alpha_{1,k+1}$ sometimes is not
so easy to obtain,  since it depends on $\bar{\lambda}_{1,h_{k+1}}$ and $\bar{\lambda}_{2,h_{k+1}}$ which are unknown.
But from (\ref{Error_k_k+1_1}), if
\begin{eqnarray*}
0< \frac{|\bar{\lambda}_{1,h_{k+1}}-\alpha_{1,k+1}|}{\bar{\lambda}_{2,h_{k+1}}-\alpha_{1,k+1}}<1,
\end{eqnarray*}
the accuracy for the solution of (\ref{aux_problem}) can be improved through more times iteration.
Then we can design the following modified one multi shifted-inverse power iteration step.

\begin{algorithm}\label{Correction_Step_Multi_Steps}
Multi Shifted-inverse Power Iteration Step
\begin{enumerate}
\item Set $u_{1,h_{k+1}}^0 =u_{1,h_k}$.

\item Do $j=0, \cdots, \ell-1$

\begin{itemize}
\item Solve the following boundary value problem:

Find $\widehat{u}_{1,h_{k+1}}^{j+1}\in V_{h_{k+1}}$ such that for any $v_{h_{k+1}}\in V_{h_{k+1}}$
\begin{eqnarray}\label{aux_problem_modified}
a(\widehat{u}_{1,h_{k+1}}^{j+1},v_{h_{k+1}})-\alpha_{1,k+1}^{j+1}b(\widehat{u}_{1,h_{k+1}}^{j+1},v_{h_{k+1}})
=b(u_{1,h_{k+1}}^j,v_{h_{k+1}}).
\end{eqnarray}
\item Normalize $\widehat{u}_{1,h_{k+1}}^{j+1}$ by
$$u_{1,h_{k+1}}^{j+1}=\frac{\widehat{u}_{1,h_{k+1}}^{j+1}}{\|\widehat{u}_{1,h_{k+1}}^{j+1}\|_a}$$
and compute the Rayleigh quotient for $u_{1,h_{k+1}}^{j+1}$
\begin{equation}\label{Rayleigh_Quotient_u_h_k_1}
\lambda_{1,h_{k+1}}^{j+1}=\frac{a(u_{1,h_{k+1}}^{j+1},u_{1,h_{k+1}}^{j+1})}
{b(u_{1,h_{k+1}}^{j+1},u_{1,h_{k+1}}^{j+1})}.
\end{equation}
\end{itemize}
End Do
\item Set $u_{1,h_{k+1}}=u_{1,h_{k+1}}^{\ell}$ and $\lambda_{1,h_{k+1}}=\lambda_{1,h_{k+1}}^{\ell}$.
\end{enumerate}
Then we obtain a new eigenpair approximation
$(\lambda_{1,h_{k+1}},u_{1,h_{k+1}})\in \mathcal{R}\times V_{h_{k+1}}$.
Summarize the above two steps into
\begin{eqnarray*}
(\lambda_{1,h_{k+1}},u_{1,h_{k+1}})={\it
Correction}(\{\alpha_{1,k+1}^j\}_{j=1}^{\ell},u_{1,h_k},V_{h_{k+1}}).
\end{eqnarray*}
\end{algorithm}
In Algorithm \ref{Correction_Step_Multi_Steps},
we can adjust $\ell$ such that the following estimate satisfies
\begin{eqnarray}\label{Error_k_k+1_Muli_Steps}
\|u_{1,h_{k+1}}-\Pi_{1,h_{k+1}}{u}_{1,h_{k+1}}\|_a
< \frac{1}{\beta} \|u_{1,h_k}-\Pi_{1,h_{k+1}}{u}_{1,h_k}\|_a,
\end{eqnarray}
where $\beta$ is a constant defined in the following section.
In fact, $\ell$ can be very small ($\ell=2$ or $3$) and the modified
iteration step makes the choice of $\alpha_{1,k+1}$ become not so sharp as
in Algorithm \ref{Correction_Step} and it improves the stability of the iteration step.

\section{Multigrid scheme for the eigenvalue problem}
In this section, we introduce a type of multigrid
scheme based on the {\it One Shifted-inverse Power Iteration Step} defined in Algorithm
\ref{Correction_Step} or \ref{Correction_Step_Multi_Steps}. This type of multigrid
method can obtain the optimal error estimate as same as solving the eigenvalue problem
 directly in the finest finite element space.

In order to do multigrid scheme, we define a sequence of triangulations $\mathcal{T}_{h_k}$
of $\Omega$ determined as follows. Suppose $\mathcal{T}_{h_1}$ is given and let
$\mathcal{T}_{h_k}$ be obtained from $\mathcal{T}_{h_{k-1}}$ via regular refinement
 (produce $\beta^d$  subelements) such that
$$h_k=\frac{1}{\beta}h_{k-1}.$$
Based on this sequence of meshes, we construct the corresponding linear finite element spaces such that
\begin{eqnarray}\label{FEM_Space_Series}
V_{h_1}\subset V_{h_2}\subset\cdots\subset V_{h_n},
\end{eqnarray}
and the following relation of approximation errors hold
\begin{eqnarray}\label{Error_k_k_1}
\eta_a(h_k)\approx\frac{1}{\beta}\eta_a(h_{k-1}),\ \ \ \
\delta_{h_k}(\lambda)\approx\frac{1}{\beta}\delta_{h_{k-1}}(\lambda),\ \ \ k=2,\ldots,n.
\end{eqnarray}
From the spectral projection definition by (\ref{Spectral_Projection_Operator}), we have
\begin{eqnarray}\label{Spectral_Projection_Estimate}
\|\Pi_{1,h_k}u_{1,h_k}-\Pi_{1,h_{k+1}}u_{1,h_k}\|_a&\leq& C_4\delta_{h_k}(\lambda_1), \ \ k=1,\ldots,n-1,
\end{eqnarray}
where the constant $C_4$ is independent of the mesh size $h_k$.

\begin{algorithm}\label{Multi_Correction}
Eigenvalue Multigrid Scheme
\begin{enumerate}
\item Construct a series of nested finite element
spaces $V_{h_1}, V_{h_2},\cdots,V_{h_n}$ such that
(\ref{FEM_Space_Series}) and (\ref{Error_k_k_1}) hold.
\item Solve the following eigenvalue problem:

Find $(\lambda_{1,h_1},u_{1,h_1})\in \mathcal{R}\times V_{h_1}$ such that
$a(u_{1,h_1},u_{1,h_1})=1$ and
\begin{eqnarray}\label{Initial_Eigen_Problem}
a(u_{1,h_1},v_{h_1})&=&\lambda_{1,h_1}b(u_{1,h_1},v_{h_1}),\ \ \ \ \forall v_{h_1}\in V_{h_1}.
\end{eqnarray}

\item  Do $k=1,\cdots,n-1$

 Obtain a new eigenpair approximation
$(\lambda_{1,h_{k+1}},u_{1,h_{k+1}})\in \mathcal{R}\times V_{h_{k+1}}$
by a correction step
\begin{eqnarray}
(\lambda_{1,h_{k+1}},u_{1,h_{k+1}})=Correction(\alpha_{1,k+1},u_{1,h_k},V_{h_{k+1}}).
\end{eqnarray}
End Do
\end{enumerate}
Finally, we obtain an eigenpair approximation
$(\lambda_{1,h_n},u_{1,h_n})\in \mathcal{R}\times V_{h_n}$.
\end{algorithm}
For simplicity in this paper, we assume the following estimates hold
\begin{eqnarray}\label{Error_Order}
\eta_a(h_k)\approx h_k,\ \ \ \ \delta_{h_k}(\lambda_1)\approx h_k,\ \ \ \ k=1,\ldots,n.
\end{eqnarray}
\begin{theorem}
After implementing Algorithm \ref{Multi_Correction},
 the resultant eigenpair approximation $(\lambda_{1,h_n},u_{1,h_n})$ has the following
error estimates
\begin{eqnarray}
\|u_{1,h_n}-\Pi_{1,h_n}u_{1,h_n}\|_a &\leq&C_5\delta_{h_n}(\lambda_1),\label{Multi_Correction_Err_fun}\\
|\lambda_{1,h_n}-\bar{\lambda}_{1,h_n}|&\leq&C_5\delta_{h_n}^2(\lambda_1),\label{Multi_Correction_Err_eigen}
\end{eqnarray}
when the mesh size $h_1$ is small enough and $\alpha_{1,{k+1}}$ is chosen as follows
\begin{eqnarray}\label{alpha_defintion_1}
\alpha_{1,k+1}=\max\left\{0,\frac{2(1+C_4/C_5)\beta\lambda_{1,h_k}-\lambda_{2,h_1}}{2(1+C_4/C_5)\beta-1}\right\}, \ \ \ 
k=1, \ldots, n-1,
\end{eqnarray}
where $C_5$ is a constant not less than the constant $C$ in (\ref{Eigenfunction_Error}).

Then there exists a constant $C_6$ such that the following final convergence results hold
\begin{eqnarray}
\|u_1-u_{1,h_n}\|_a &\leq&C_6\delta_{h_n}(\lambda_1),\label{Multi_Correction_Err_fun_Final}\\
|\lambda_1-\lambda_{1,h_n}|&\leq&C_6\delta_{h_n}^2(\lambda_1).\label{Multi_Correction_Err_eigen_Final}
\end{eqnarray}

\end{theorem}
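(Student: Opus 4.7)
The plan is to prove both discrete-level estimates (\ref{Multi_Correction_Err_fun}) and (\ref{Multi_Correction_Err_eigen}) by induction on $k$, and then peel off the final continuous estimates by a standard triangle-inequality argument using Proposition~\ref{Error_estimate_Proposition}. For the base case $k=1$, since $u_{1,h_1}$ is an exact discrete eigenfunction on $V_{h_1}$ we have $\Pi_{1,h_1}u_{1,h_1}=u_{1,h_1}$, so $\|u_{1,h_1}-\Pi_{1,h_1}u_{1,h_1}\|_a=0$ and (\ref{Multi_Correction_Err_fun}) holds trivially at level~$1$.

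For the inductive step, assume $\|u_{1,h_k}-\Pi_{1,h_k}u_{1,h_k}\|_a\le C_5\delta_{h_k}(\lambda_1)$. I would first use the triangle inequality together with the spectral-projection gap (\ref{Spectral_Projection_Estimate}) to estimate
\begin{equation*}
\|u_{1,h_k}-\Pi_{1,h_{k+1}}u_{1,h_k}\|_a\le \|u_{1,h_k}-\Pi_{1,h_k}u_{1,h_k}\|_a+\|\Pi_{1,h_k}u_{1,h_k}-\Pi_{1,h_{k+1}}u_{1,h_k}\|_a\le (C_4+C_5)\delta_{h_k}(\lambda_1),
\end{equation*}
which by (\ref{Error_k_k_1}) equals $\beta(C_4+C_5)\delta_{h_{k+1}}(\lambda_1)$. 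I would then plug this bound into (\ref{Error_k_k+1_1}) of Theorem~\ref{Error_Estimate_One_Correction_Theorem}. The crucial algebraic check is that the prescribed $\alpha_{1,k+1}$ in (\ref{alpha_defintion_1}) forces $\theta_{1,k+1}\le\frac{1}{2(1+C_4/C_5)\beta}=\frac{C_5}{2(C_4+C_5)\beta}$; indeed a direct substitution using $\lambda_{1,h_k}\approx\bar\lambda_{1,h_{k+1}}$ and $\lambda_{2,h_1}\ge\bar\lambda_{2,h_{k+1}}\gtrsim\lambda_2$ yields numerator $|\bar\lambda_{1,h_{k+1}}-\alpha_{1,k+1}|\approx(\lambda_{2,h_1}-\lambda_{1,h_k})/(r-1)$ and denominator $\bar\lambda_{2,h_{k+1}}-\alpha_{1,k+1}\gtrsim r(\lambda_{2,h_1}-\lambda_{1,h_k})/(r-1)$ with $r=2(1+C_4/C_5)\beta$. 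Combining yields, for $h_1$ small so that the denominator in (\ref{Error_k_k+1_1}) exceeds $1/2$,
\begin{equation*}
\|u_{1,h_{k+1}}-\Pi_{1,h_{k+1}}u_{1,h_{k+1}}\|_a\le \frac{2\theta_{1,k+1}(C_4+C_5)\delta_{h_k}(\lambda_1)}{1}\le \frac{C_5}{\beta}\delta_{h_k}(\lambda_1)=C_5\delta_{h_{k+1}}(\lambda_1),
\end{equation*}
closing the induction for (\ref{Multi_Correction_Err_fun}). A side condition I would have to verify is that $\alpha_{1,k+1}<\bar\lambda_{2,h_{k+1}}$ so that Theorem~\ref{Error_Estimate_One_Correction_Theorem} applies; this follows from $\bar\lambda_{2,h_{k+1}}\le\lambda_{2,h_1}=\bar\lambda_{2,h_1}$ combined with smallness of $\lambda_{1,h_k}-\bar\lambda_{1,h_{k+1}}$.

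For the eigenvalue estimate (\ref{Multi_Correction_Err_eigen}), I would apply Lemma~\ref{Rayleigh_Quotient_error_theorem} to the discrete problem on $V_{h_n}$ with $\psi=u_{1,h_n}$ and the true discrete eigenpair $(\bar\lambda_{1,h_n},\Pi_{1,h_n}u_{1,h_n}/\|\Pi_{1,h_n}u_{1,h_n}\|_b)$. Since $u_{1,h_n}-\Pi_{1,h_n}u_{1,h_n}\perp M_{h_n}(\lambda_1)$ in the $a$-inner product and the $b$-norm is controlled by $\|\cdot\|_a/\bar\lambda_{2,h_n}$ on that orthogonal complement, both terms in (\ref{rayexpan}) are $O(\|u_{1,h_n}-\Pi_{1,h_n}u_{1,h_n}\|_a^2)$, giving (\ref{Multi_Correction_Err_eigen}) with $C_5$ suitably enlarged.

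Finally, (\ref{Multi_Correction_Err_fun_Final}) follows from the triangle inequality $\|u_1-u_{1,h_n}\|_a\le\|u_1-\Pi_{1,h_n}u_{1,h_n}\|_a+\|u_{1,h_n}-\Pi_{1,h_n}u_{1,h_n}\|_a$ combined with (\ref{Eigenfunction_Error}) of Proposition~\ref{Error_estimate_Proposition} applied to the discrete eigenfunction $\Pi_{1,h_n}u_{1,h_n}/\|\Pi_{1,h_n}u_{1,h_n}\|_a$, and (\ref{Multi_Correction_Err_eigen_Final}) follows from $|\lambda_1-\lambda_{1,h_n}|\le|\lambda_1-\bar\lambda_{1,h_n}|+|\bar\lambda_{1,h_n}-\lambda_{1,h_n}|$ together with the second bound in Proposition~\ref{Error_estimate_Proposition}(ii). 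I expect the main obstacle to be the bookkeeping around the choice of $\alpha_{1,k+1}$: one must carefully show that the ratio $\theta_{1,k+1}$ stays uniformly below $C_5/(2(C_4+C_5)\beta)$ along the whole induction even though $\bar\lambda_{1,h_{k+1}}$ and $\bar\lambda_{2,h_{k+1}}$ are unknown and $\lambda_{1,h_k}$ drifts, which relies on the initial mesh $h_1$ being small enough to guarantee a uniform spectral gap $\bar\lambda_{2,h_{k+1}}-\bar\lambda_{1,h_{k+1}}\gtrsim\lambda_2-\lambda_1$.
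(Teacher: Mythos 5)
Your argument is essentially the paper's own proof: induction over the levels, the triangle inequality with (\ref{Spectral_Projection_Estimate}) to pass from $\Pi_{1,h_k}$ to $\Pi_{1,h_{k+1}}$, Theorem \ref{Error_Estimate_One_Correction_Theorem} with the shift (\ref{alpha_defintion_1}) giving the contraction factor $\theta_{1,k+1}\approx\frac{1}{2(1+C_4/C_5)\beta}$, Lemma \ref{Rayleigh_Quotient_error_theorem} for (\ref{Multi_Correction_Err_eigen}), and triangle inequalities with Proposition \ref{Error_estimate_Proposition} for (\ref{Multi_Correction_Err_fun_Final})--(\ref{Multi_Correction_Err_eigen_Final}). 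Only minor tidying is needed: treat also the branch $\alpha_{1,k+1}=0$ of the max in (\ref{alpha_defintion_1}) (your gap computation covers only the positive branch, and $\theta_{1,k+1}$ is $\frac{1}{2(1+C_4/C_5)\beta}+\mathcal{O}(h_1^2)$ rather than bounded by it exactly, which the smallness of $h_1$ absorbs just as in the paper), and in the Rayleigh-quotient step take the exact discrete eigenfunction scaled to match $\psi=u_{1,h_n}$ (e.g.\ $u=\Pi_{1,h_n}u_{1,h_n}$ itself) rather than $b$-normalized, so that $u-\psi$ is genuinely of size $\|u_{1,h_n}-\Pi_{1,h_n}u_{1,h_n}\|_a$.
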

\begin{proof}
First, it is obvious that $2(1+C_4/C_5)\beta > 1$.
If we choose the $\alpha_{1,k+1}$ as in (\ref{alpha_defintion_1}) and
$2(1+C_4/C_5)\beta\lambda_{1,h_k}-\lambda_{2,h_1}>0$,
then $\alpha_{1,k+1}=\frac{2(1+C_4/C_5)\beta\lambda_{1,h_k}
-\lambda_{2,h_1}}{2(1+C_4/C_5)\beta-1}$ and the following estimates hold
\begin{eqnarray}\label{Rate_Convergence_1}
\theta_{1,k+1}&=&\frac{|\bar{\lambda}_{1,h_{k+1}}-\alpha_{1,k+1}|}
{\bar{\lambda}_{2,h_{k+1}}-\alpha_{1,k+1}}\nonumber\\
&=&\frac{|\lambda_{2,h_1}-\bar{\lambda}_{1,h_{k+1}}+2(1+C_4/C_5)\beta(\bar{\lambda}_{1,h_{k+1}}-\lambda_{1,h_k})|}
{2(1+C_4/C_5)\beta(\bar{\lambda}_{2,h_{k+1}}-\lambda_{1,h_k})+\lambda_{2,h_1}-\bar{\lambda}_{2,h_{k+1}}}\nonumber\\
&=&\frac{1}{2(1+C_4/C_5)\beta}+\mathcal{O}(h_1^2).
\end{eqnarray}

If we choose $\alpha_{1,k+1}$ as in (\ref{alpha_defintion_1}) and $2(1+C_4/C_5)\beta\lambda_{1,h_k}-\lambda_{2,h_1}<0$,
 then $\alpha_{1,k+1}=0$ and we have
 \begin{eqnarray}\label{Rate_Convergence_2}
\theta_{1,k+1}=\frac{|\bar{\lambda}_{1,h_{k+1}}-\alpha_{1,k+1}|}{\bar{\lambda}_{2,h_{k+1}}-\alpha_{1,k+1}}
&=&\frac{|\lambda_{1,h_k}+(\bar{\lambda}_{1,h_{k+1}}-\lambda_{1,h_k})|}
{\lambda_{2,h_1} +\bar{\lambda}_{2,h_{k+1}} -\lambda_{2,h_1}}\nonumber\\
&=&\frac{1}{2(1+C_4/C_5)\beta}+\mathcal{O}(h_1^2). 
\end{eqnarray}

Now, let us prove (\ref{Multi_Correction_Err_fun}) by the method of induction.
First, it is obvious that (\ref{Multi_Correction_Err_fun}) holds for $n=1$.
Then we assume that (\ref{Multi_Correction_Err_fun}) holds for $n=k$. It means we have the following estimate
\begin{eqnarray}\label{Assumption}
\|u_{1,h_k}-\Pi_{1,h_k}u_{1,h_k}\|_a&\leq& C_5\delta_{h_k}(\lambda_1).
\end{eqnarray}
Now let us consider the case of $n=k+1$.
Combining (\ref{Spectral_Projection_Estimate}), (\ref{Assumption})
and the triangle inequality leads to the following estimates
\begin{eqnarray}\label{Estimate_1}
\|u_{1,h_k}-\Pi_{1,h_{k+1}}u_{1,h_k}\|_a &\leq& \|u_{1,h_k}-\Pi_{1,h_k}u_{1,h_k}\|_a +
\|\Pi_{1,h_k}u_{1,h_k}-\Pi_{1,h_{k+1}}u_{1,h_k}\|_a\nonumber\\
&\leq&  \|u_{1,h_k}-\Pi_{1,h_k}u_{1,h_k}\|_a +C_4\delta_{h_k}(\lambda_1)\nonumber\\
&\leq& C_5\Big(1+\frac{C_4}{C_5}\Big)\delta_{h_k}(\lambda_1).
\end{eqnarray}
From (\ref{Definition_Theta}), (\ref{Rate_Convergence_1})-(\ref{Rate_Convergence_2})
and (\ref{Estimate_1}), we have
\begin{eqnarray}\label{Estimate_2}
\frac{\theta_{1,k+1}}{1-(1+\theta_{1,k+1})\|u_{1,h_k}-\Pi_{1,h_{k+1}}u_{1,h_k}\|_a}
&=&\frac{1}{2(1+C_4/C_5)\beta}+\mathcal{O}(h_1)\nonumber\\
&<&\frac{1}{(1+C_4/C_5)\beta},
\end{eqnarray}
when $h_1$ is small enough.

 From Theorem \ref{Error_Estimate_One_Correction_Theorem}, (\ref{Estimate_1}) and
 (\ref{Estimate_2}), we have
\begin{eqnarray}\label{Error_k_k+1_1_New}
\|u_{1,h_{k+1}}-\Pi_{1,h_{k+1}}u_{1,h_{k+1}}\|_a &\leq& \frac{C_5}{\beta}\delta_{h_k}(\lambda_1)
=C_5\delta_{h_{k+1}}(\lambda_1).
\end{eqnarray}
This means that the result (\ref{Multi_Correction_Err_fun}) also holds for $n=k+1$.
Thus we prove the desired result (\ref{Multi_Correction_Err_fun}).
From Lemma \ref{Rayleigh_Quotient_error_theorem}
and (\ref{Multi_Correction_Err_fun}), we can obtain the desired result
 (\ref{Multi_Correction_Err_eigen}). Finally, (\ref{Multi_Correction_Err_fun_Final}) and
 (\ref{Multi_Correction_Err_eigen_Final})
  can be proved from (\ref{Multi_Correction_Err_fun}), (\ref{Multi_Correction_Err_eigen})
 and the triangle inequality.
%
%
\end{proof}

\begin{remark}
We also investigate the condition of the boundary value problem with different
choices of $\alpha_{1,k+1}$. If we choose $\alpha_{1,k+1}$ as (\ref{alpha_defintion_1}) and
$2(1+C_4/C_5)\beta\lambda_{1,h_k}-\lambda_{2,h_1}>0$, then $\alpha_{1,k+1}<\bar{\lambda}_{1,h_{k+1}}$
when the mesh size $h_1$ is small enough. It means that (\ref{aux_problem})
is a symmetric positive definite linear equation and its condition has the following estimates
\begin{eqnarray}
\frac{\bar{\lambda}_{n,h_{k+1}}-\alpha_{1,k+1}}{\bar{\lambda}_{1,h_{k+1}}-\alpha_{1,k+1}} \approx
\frac{{\lambda}_{1}}{{\lambda}_{1}-\alpha_{1,k+1}}
\frac{\bar{\lambda}_{n,h_{k+1}}}{\bar{\lambda}_{1,{h_{k+1}}}}
\approx \frac{2(1+C_4/C_5)\beta\lambda_1}{\lambda_2-\lambda_1}{\rm cond}(A),
\end{eqnarray}
where $A$ and $\bar{\lambda}_{n,h_{k+1}}$ denote the stiff matrix and
the largest eigenvalue approximation of (\ref{weak_problem_Discrete}), respectively,
in the finite element space $V_{h_{k+1}}$, the convergence results of $\lambda_{1,h_k}$,
$\bar{\lambda}_{1,h_k}$ and $\lambda_{2,h_1}$ are used.
\end{remark}

\section{Work estimate of eigenvalue multigrid scheme}
In this section, we turn our attention to the estimate of computational work
for Algorithm \ref{Multi_Correction}. We will show that
Algorithm \ref{Multi_Correction} makes solving the eigenvalue problem need almost the
same work as solving the corresponding boundary value problem if we adopt the
multigrid method to solve the involved linear problems (\ref{aux_problem})
(see, e.g., \cite{BrennerScott,Hackbusch,Hackbusch_Book,Xu,Xu_Two_Grid}).

First, we define the dimension of each level linear
finite element space as $N_k:={\rm dim}V_{h_k}$. Then we have
\begin{eqnarray}\label{relation_dimension}
N_k \thickapprox\Big(\frac{1}{\beta}\Big)^{d(n-k)}N_n,
\ \ \ k=1,2,\ldots, n.
\end{eqnarray}
\begin{theorem}
Assume the eigenvalue problem solved in the coarse space $V_{h_1}$ needs work
$\mathcal{O}(M_{h_1})$ and the work of the multigrid solver
 in each level space $V_{h_k}$ is
$\mathcal{O}(N_k)$ for $k=2,3,\cdots,n$. Then the total work involved in
 Algorithm \ref{Multi_Correction}
 is $\mathcal{O}(N_n+M_{h_1})$. Furthermore, the complexity
will be $\mathcal{O}(N_n)$ provided $M_{h_1}\leq N_n$.
\end{theorem}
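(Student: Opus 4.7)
The plan is to tally the computational cost of each step of Algorithm \ref{Multi_Correction} and show that the sum collapses to $\mathcal{O}(N_n+M_{h_1})$ thanks to the geometric decay of the dimensions $N_k$ guaranteed by (\ref{relation_dimension}).

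First I would isolate the operations performed at level $k+1$ inside the correction step: solving the shifted boundary value problem (\ref{aux_problem}), a normalization, and the Rayleigh quotient evaluation (\ref{Rayleigh_Quotient_u_h_k_1}). By hypothesis the linear solve via multigrid requires work $\mathcal{O}(N_{k+1})$, and both the normalization and Rayleigh quotient are vector operations in $V_{h_{k+1}}$ of cost $\mathcal{O}(N_{k+1})$; a subtle point is that the right-hand side $b(u_{1,h_k},\cdot)$ lives on the finer mesh but, because $V_{h_k}\subset V_{h_{k+1}}$, its assembly reduces to a matrix-vector product on $V_{h_{k+1}}$ which still costs $\mathcal{O}(N_{k+1})$. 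Hence the correction step at level $k+1$ has total cost $\mathcal{O}(N_{k+1})$.

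Next I would add the initial cost $\mathcal{O}(M_{h_1})$ of solving (\ref{Initial_Eigen_Problem}) and sum the level-wise contributions. Using (\ref{relation_dimension}), the total work is
\begin{eqnarray*}
\text{Total Work} &=& \mathcal{O}(M_{h_1}) + \sum_{k=1}^{n-1}\mathcal{O}(N_{k+1})
= \mathcal{O}(M_{h_1}) + \mathcal{O}\!\left(\sum_{k=1}^{n-1}\Big(\tfrac{1}{\beta}\Big)^{d(n-k-1)}N_n\right)\\
&=& \mathcal{O}(M_{h_1}) + \mathcal{O}\!\left(N_n\sum_{j=0}^{n-2}\beta^{-dj}\right).
\end{eqnarray*}
Since $\beta\geq 2$ and $d\geq 1$, the geometric series $\sum_{j=0}^{n-2}\beta^{-dj}$ is bounded by $1/(1-\beta^{-d})$ independently of $n$, so the sum collapses to $\mathcal{O}(N_n)$ and the total work is $\mathcal{O}(N_n+M_{h_1})$, which gives (\ref{Multi_Correction_Err_fun_Final}) part of the claim. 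The final sentence follows trivially: if $M_{h_1}\leq N_n$ then $\mathcal{O}(N_n+M_{h_1})=\mathcal{O}(N_n)$.

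There is no real obstacle in this argument; its content is essentially bookkeeping combined with the geometric refinement assumption. The only point that deserves care is verifying that every auxiliary operation in the correction step (assembly of $b(u_{1,h_k},v_{h_{k+1}})$, embedding of the coarse iterate into the fine space, evaluation of the two Rayleigh quotient inner products) indeed fits in $\mathcal{O}(N_{k+1})$ work, which is immediate for nested linear finite element spaces with sparse mass and stiffness matrices. With this in hand the theorem follows directly from the convergence of the geometric series.
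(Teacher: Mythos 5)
Your proposal is correct and follows essentially the same route as the paper: bound the per-level correction work by $\mathcal{O}(N_k)$, sum over levels, and use the geometric relation (\ref{relation_dimension}) to collapse the sum to $\mathcal{O}(N_n+M_{h_1})$. The only blemish is the stray citation of (\ref{Multi_Correction_Err_fun_Final}), which is an error estimate unrelated to the work count, but this does not affect the argument.
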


\begin{proof}
Let $W_k$ denote the work of the correction step in the $k$-th finite element
space $V_{h_k}$. Then with the correction definition, we have
\begin{eqnarray}\label{work_k}
W_k&=&\mathcal{O}(N_k).
\end{eqnarray}
Iterating (\ref{work_k}) and using the fact (\ref{relation_dimension}), we obtain
\begin{eqnarray}\label{Work_Estimate}
{\rm Total\ work} &=&\sum_{k=1}^nW_k= \mathcal{O}\Big(M_{h_1}+\sum_{k=2}^nN_k\Big)
=\mathcal{O}\Big(M_{h_1}+\sum_{k=2}^nN_k\Big)\nonumber\\
&=&\mathcal{O}\Big(M_{h_1}+\sum_{k=2}^n\Big(\frac{1}{\beta}\Big)^{d(n-k)}N_n\Big)
=\mathcal{O}(N_n+M_{h_1}).
\end{eqnarray}
This is the desired result $\mathcal{O}(N_n+M_{h_1})$ and the
one $\mathcal{O}(N_n)$  can be obtained by the condition $M_{h_1}\leq N_n$.
\end{proof}

\section{Numerical results}
In this section, two numerical examples are presented to illustrate the
efficiency of the multigrid scheme proposed in this
paper.

\subsection{Model eigenvalue problem}
Here we give the numerical results of the multigrid
scheme for the Laplace eigenvalue problem
on the two dimensional domain $\Omega=(0,1 )\times (0, 1)$.  The sequence of
finite element spaces is constructed by
using linear element on the series of meshes which are produced by
regular refinement with $\beta =2$ (producing $\beta^d$ congruent subelements).
In this example, we use two meshes which are generated by Delaunay method as the initial mesh
$\mathcal{T}_{h_1}$ to produce two sequences of finite element spaces for investigating
 the convergence behaviors.
Figure \ref{Initial_Mesh} shows the corresponding
initial meshes: one is coarse and the other is fine.

Algorithm \ref{Multi_Correction} is applied to solve the eigenvalue problem.
For comparison, we also solve the eigenvalue problem by the direct method.
\begin{figure}[htb]
\centering
\includegraphics[width=5cm,height=5cm]{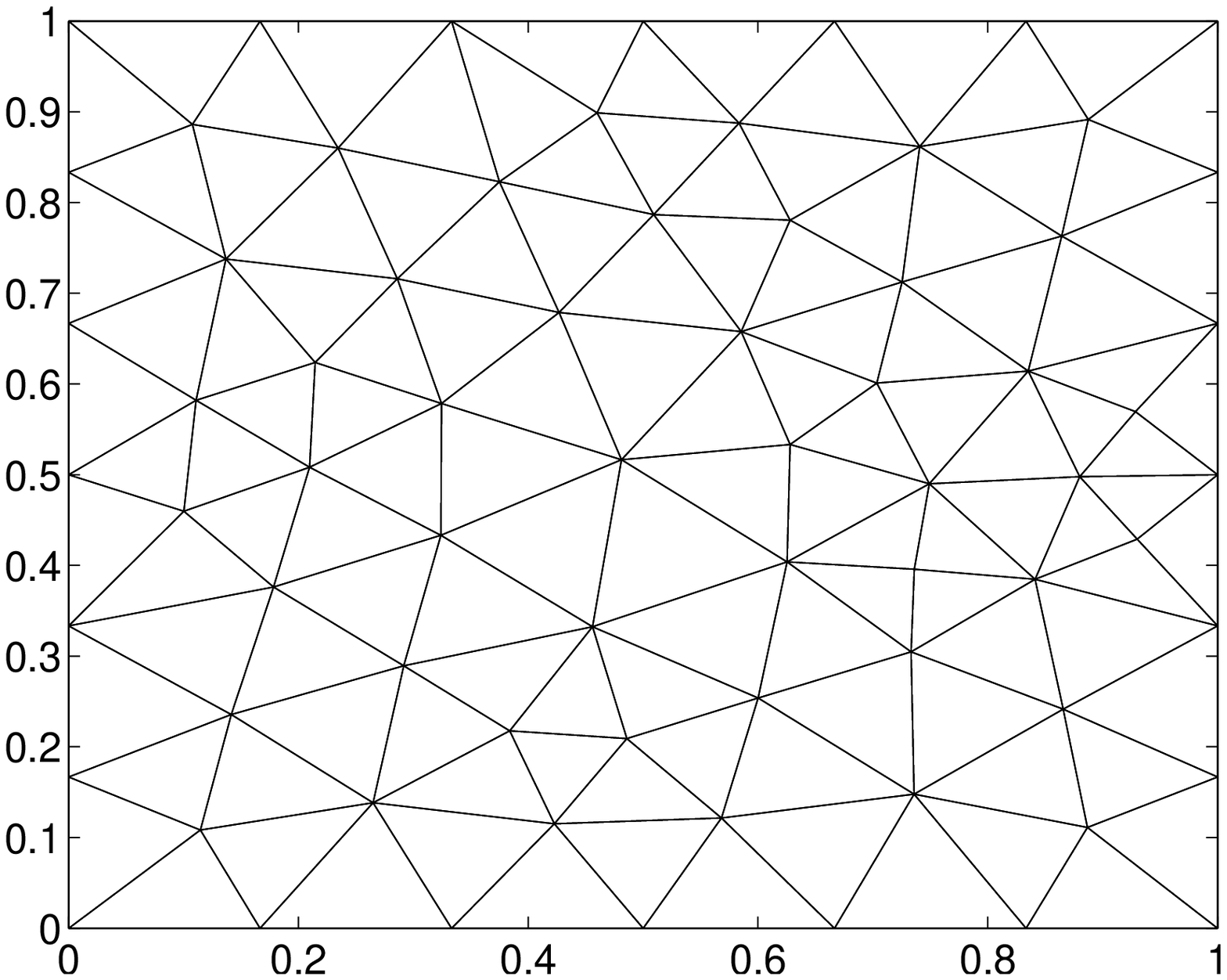}
\includegraphics[width=5cm,height=5cm]{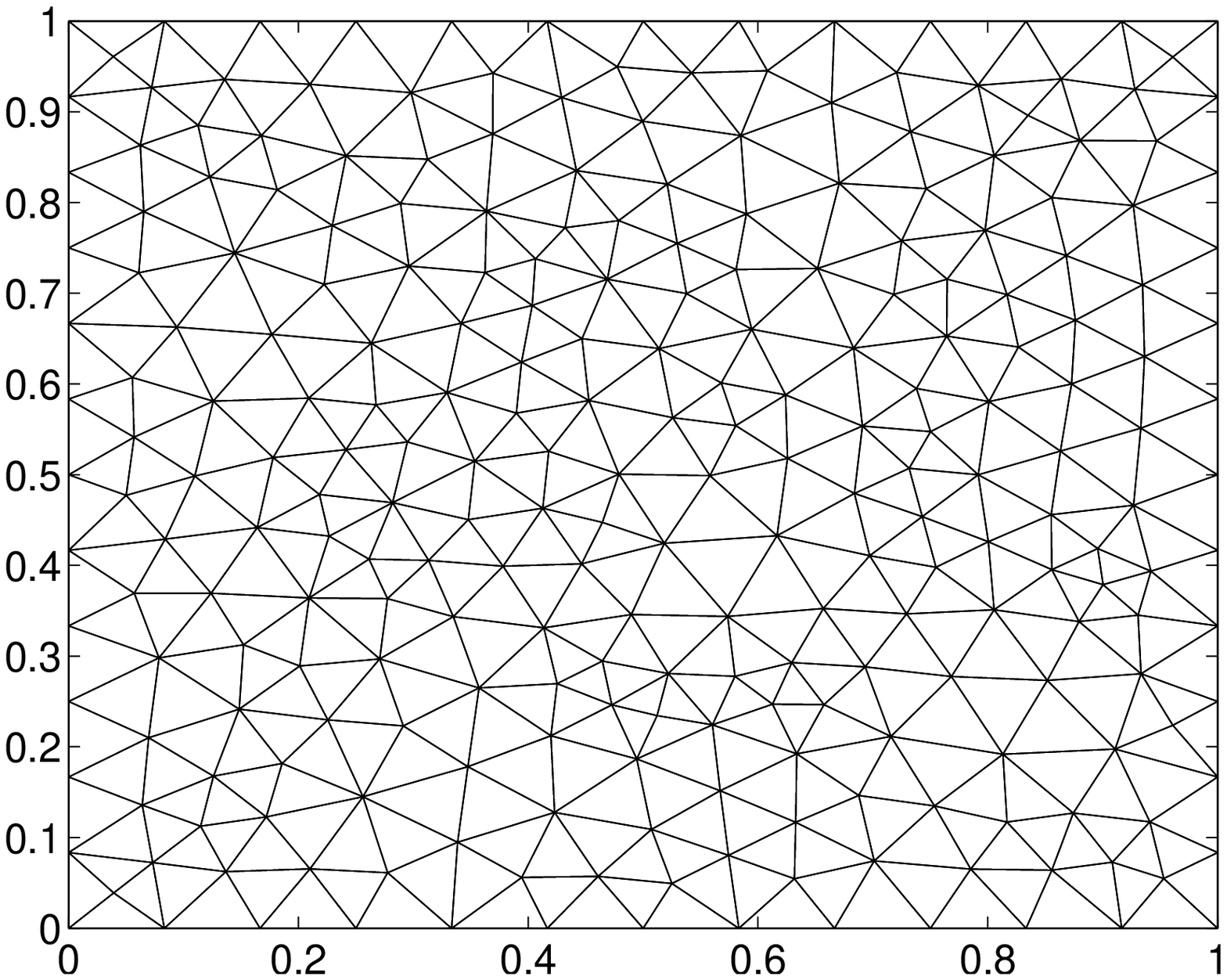}
\caption{\small\texttt The coarse and fine initial meshes for Example 1.}
\label{Initial_Mesh}
\end{figure}

Figure \ref{numerical_multi_grid_2D}
gives the corresponding numerical results for the first eigenvalue
$\lambda_1=2\pi^2$ and the corresponding eigenfunction on the two initial meshes
 illustrated in Figure \ref{Initial_Mesh}.
\begin{figure}[htb]
\centering
\includegraphics[width=7cm,height=6cm]{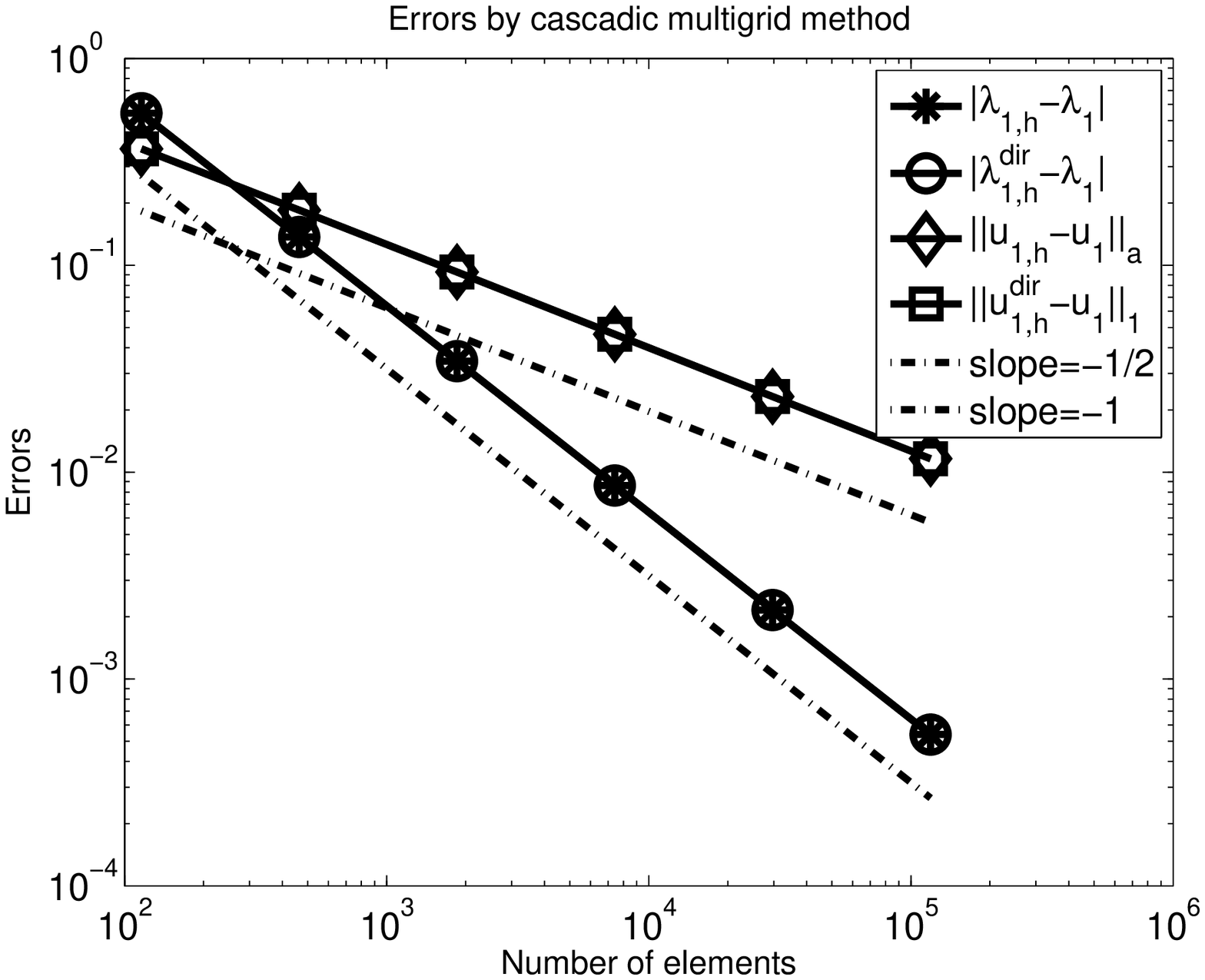}
\includegraphics[width=7cm,height=6cm]{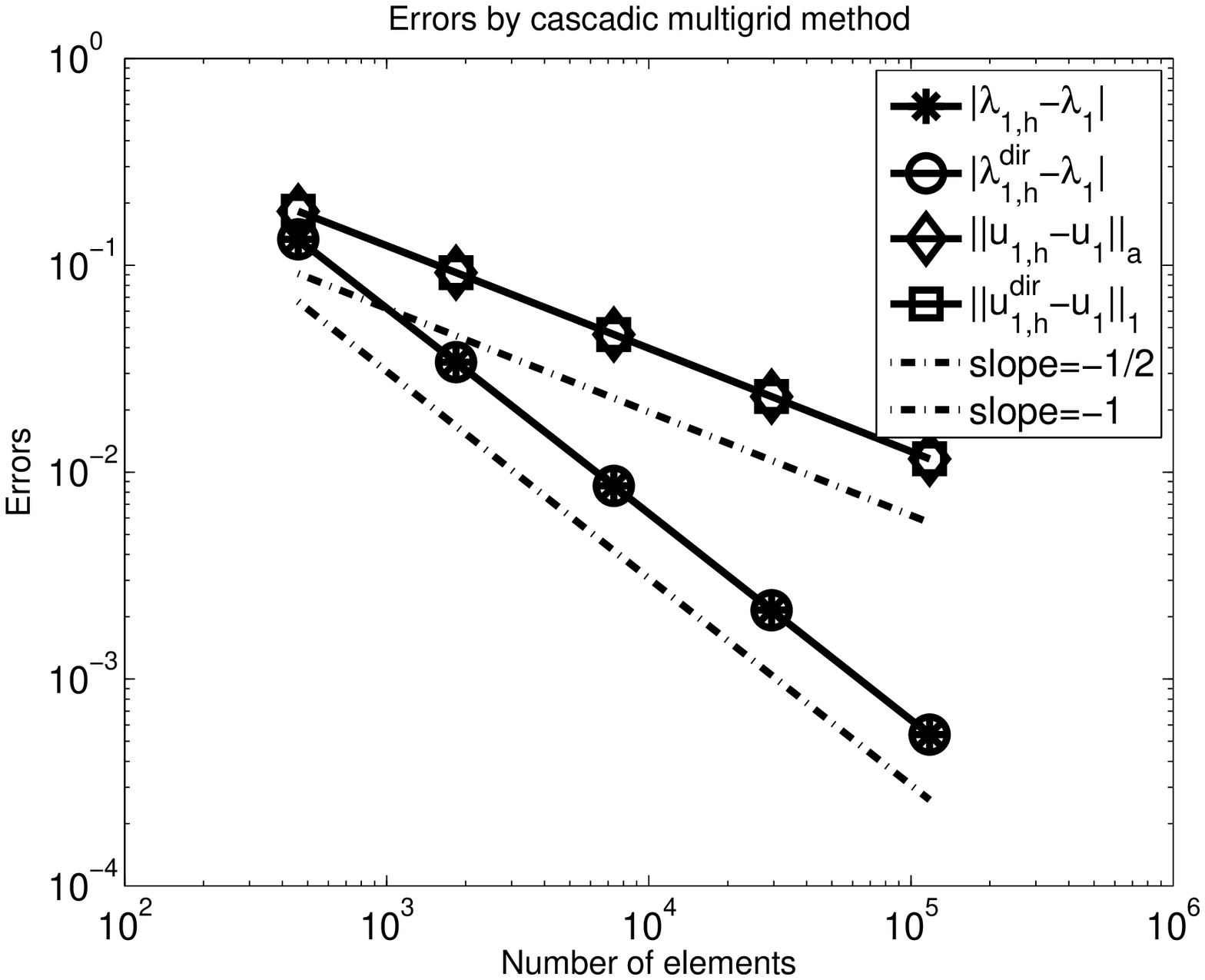}
\caption{\small\texttt The errors of the multigrid
algorithm for the first eigenvalue $2\pi^2$ and the corresponding eigenfunction,
where $u_h^{\rm dir}$ and $\lambda_h^{\rm dir}$ denote the eigenfunction
 and eigenvalue approximation by direct eigenvalue solving
 (The left subfigure is for the coarse initial mesh in the left of Figure \ref{Initial_Mesh}
and the right one for the fine initial mesh in the right of Figure \ref{Initial_Mesh}).}
\label{numerical_multi_grid_2D}
\end{figure}

From Figure \ref{numerical_multi_grid_2D},
we find the multigrid scheme can obtain
the optimal error estimates as same as the direct eigenvalue solving method for the eigenvalue and the
corresponding eigenfunction approximations.

In order to show the efficiency of the proposed method in this paper, we also compare
the running time of Algorithm \ref{Multi_Correction} and the direct eigenvalue solving. Here
we choose the package ARPACK \cite{LehoucqMaschhoffSorensenYang,LehoucqSorensenYang}
as the direct eigenvalue solving tool to do the comparison. Here the
conjugate gradient (CG) iteration and algebraic multigrid preconditioner are
adopted to act as the linear solver in the ARPACK. In Algorithm \ref{Multi_Correction},
 we only choose the CG iteration as the linear solver.
Both methods are running in the same machine {\it PowerEdge R720}
 with the Linux system. The corresponding results are shown in Table \ref{Table_CPU_Time_Exam_1}
 which implies that Algorithm \ref{Multi_Correction} can improve the efficiency of
 eigenvalue problem solving.
\begin{table}[http]
\centering
\caption{The CPU time comparison between Algorithm \ref{Multi_Correction} and
direct eigenvalue solver. Here the number of elements in $V_{h_1}$ is $3968$.}
\label{Table_CPU_Time_Exam_1}
\begin{tabular}{||c|c|c|c||}
\hline
{\footnotesize Number of levels } &{\footnotesize Number of elements}&{\footnotesize time for ARpack}&
{\footnotesize time for Algorithm \ref{Multi_Correction}}\\
\hline
4    &   253952   &  7.69     &    2.30     \\
\hline
5    &   1015808  &  35.96    &    7.98     \\
\hline
6    &   4063232  &  191.47   &    31.64    \\
\hline
7    &   16252928 &  1258.35  &    127.08   \\
\hline
\end{tabular}
\end{table}

We also check the convergence behavior of multi eigenvalue approximations with Algorithm
\ref{Multi_Correction}. Here the first six eigenvalues
$\lambda=2\pi^2, 5\pi^2, 5\pi^2, 8\pi^2,$ $10\pi^2, 10\pi^2$
are investigated. We adopt the meshes in Figure \ref{Initial_Mesh} as
the initial meshes and the corresponding numerical results are shown
in Figure \ref{numerical_multi_grid_2D_6} which also exhibits the
optimal convergence rate of the multigrid scheme.
\begin{figure}[httb]
\centering
\includegraphics[width=7cm,height=6cm]{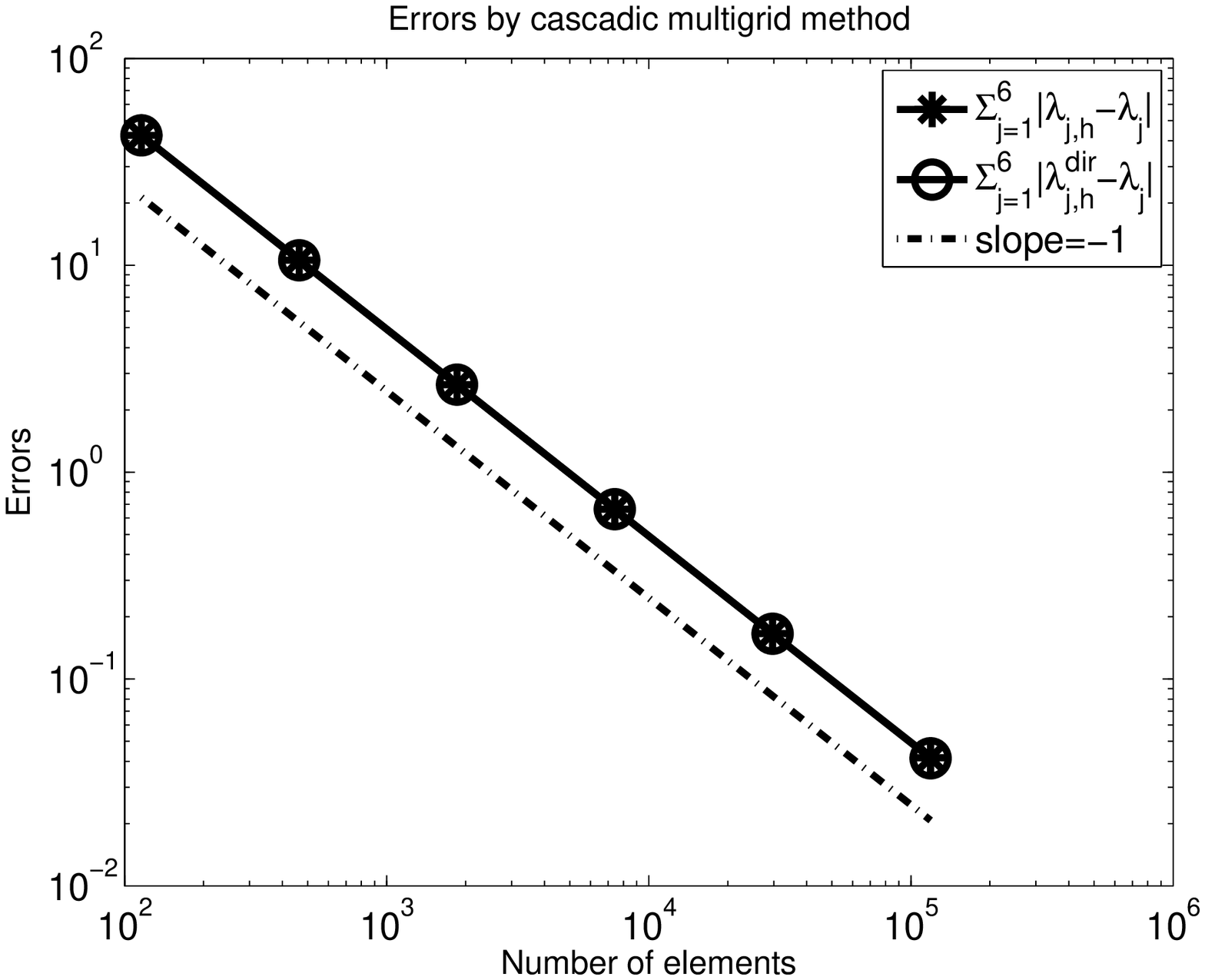}
\includegraphics[width=7cm,height=6cm]{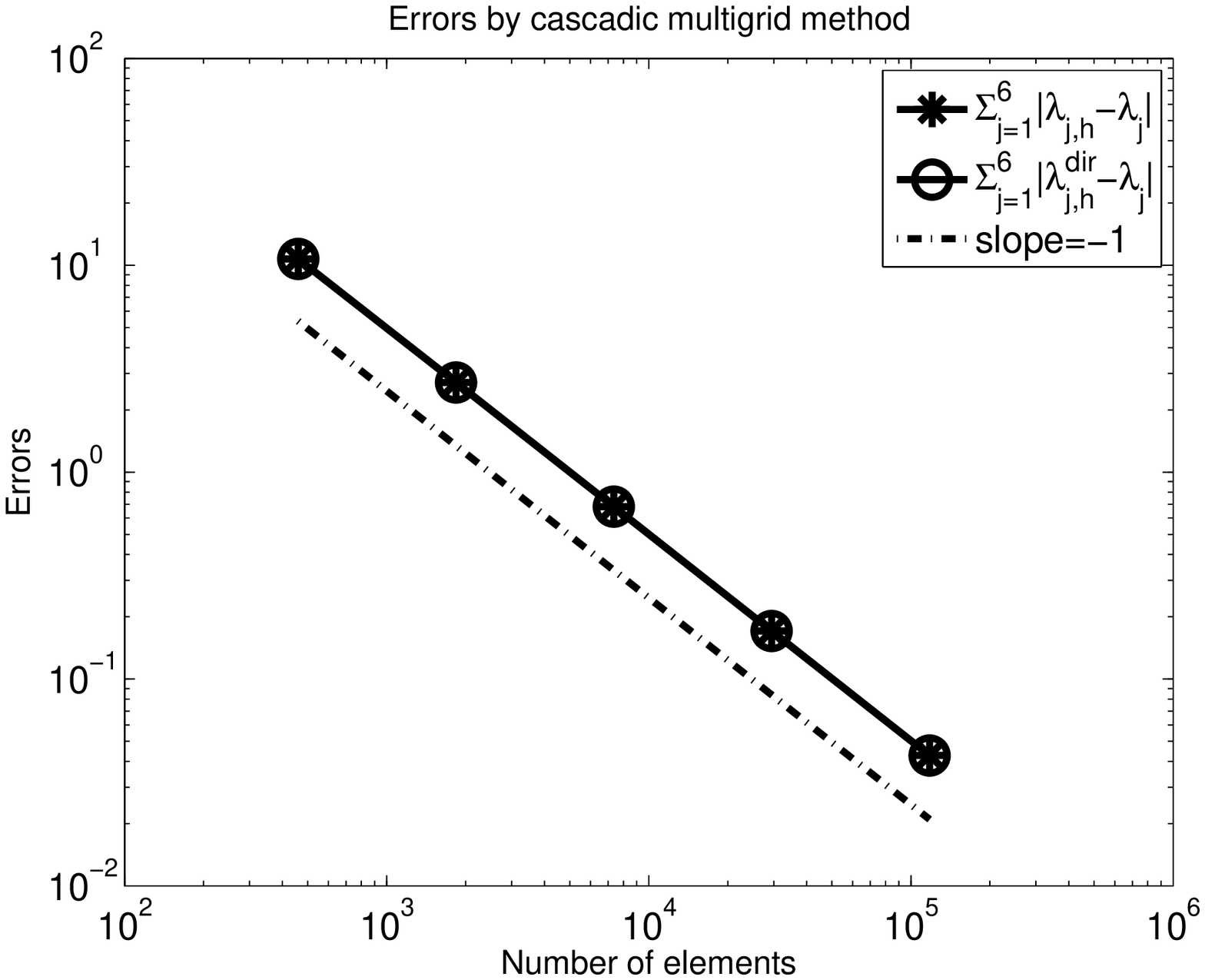}
\caption{\small\texttt The errors of the multigrid
algorithm for the first six eigenvalues on the unit square
(The left subfigure is for the coarse initial mesh in the left of Figure \ref{Initial_Mesh}
and the right one for the fine initial mesh in the right of Figure
\ref{Initial_Mesh}).}\label{numerical_multi_grid_2D_6}
\end{figure}

\subsection{More general eigenvalue problem}
Here we give the numerical results of the multigrid
scheme for solving a more general eigenvalue problem on the unit square
domain $\Omega=(0, 1)\times (0, 1)$:

Find $(\lambda,u)$ such that
\begin{equation}\label{Example_2}
\left\{
\begin{array}{rcl}
-\nabla\cdot\mathcal{A}\nabla u+\phi u&=&\lambda\rho u,\quad{\rm in}\ \Omega,\\
u&=&0,\quad\ \ \ \ {\rm on}\ \partial\Omega,\\
\int_{\Omega}\rho u^2d\Omega&=&1,
\end{array}
\right.
\end{equation}
where
\begin{equation*}
\mathcal{A}=\left (
\begin{array}{cc}
$$1+(x_1-\frac{1}{2})^2$$&$$(x_1-\frac{1}{2})(x_2-\frac{1}{2})$$\\
$$(x_1-\frac{1}{2})(x_2-\frac{1}{2})$$&$$1+(x_2-\frac{1}{2})^2$$
\end{array}
\right),
\end{equation*}
$\phi={\rm e}^{(x_1-\frac{1}{2})(x_2-\frac{1}{2})}$ and
$\rho=1+(x_1-\frac{1}{2})(x_2-\frac{1}{2})$.

We first solve the eigenvalue problem
(\ref{Example_2}) in the linear finite element space on the coarse mesh
$\mathcal{T}_{h_1}$. Then refine the mesh by the regular way to produce
a series of meshes $\mathcal{T}_{h_k}\ (k=2,\cdots,n)$ with $\beta =2$
(connecting the midpoints of each edge) and solve the auxiliary boundary
 value problem (\ref{aux_problem}) in the finer linear finite element space
$V_{h_k}$ defined on $\mathcal{T}_{h_k}$.

In this example, we also use two coarse meshes which are shown in Figure \ref{Initial_Mesh}
as the initial meshes to investigate the convergence behaviors.
Since the exact solution is not known, we choose an adequately accurate eigenvalue
approximations with the extrapolation method (see, e.g., \cite{LinLin}) as the exact eigenvalue.
Figure \ref{numerical_multi_grid_Exam_2} gives the corresponding
numerical results for the first six eigenvalue approximations and
their corresponding eigenfunction approximations.
Here we also compare the numerical results with the direct algorithm.
Figure \ref{numerical_multi_grid_Exam_2} also exhibits the optimal convergence rate of
Algorithm \ref{Multi_Correction}.
\begin{figure}[htb]
\centering
\includegraphics[width=7cm,height=6cm]{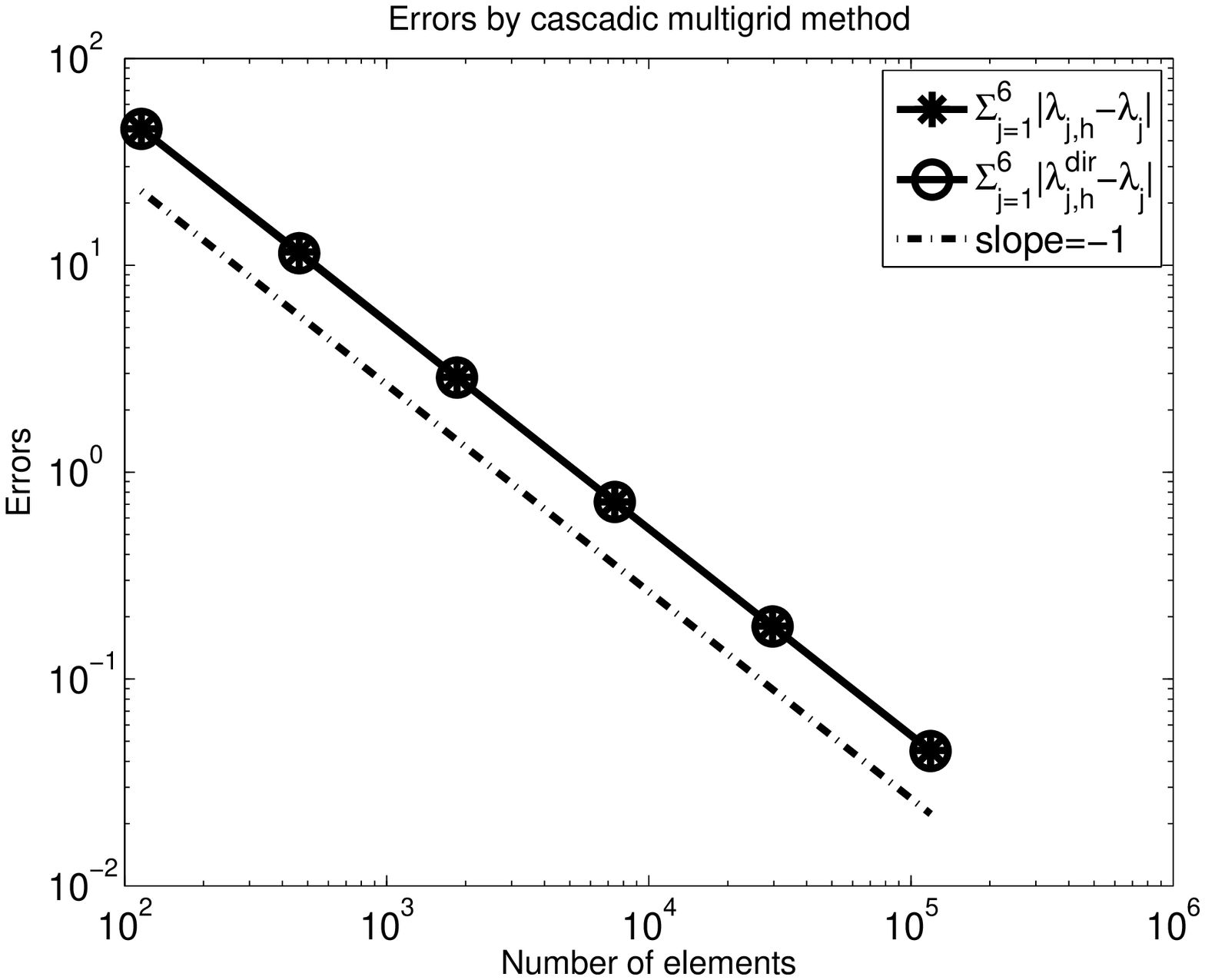}
\includegraphics[width=7cm,height=6cm]{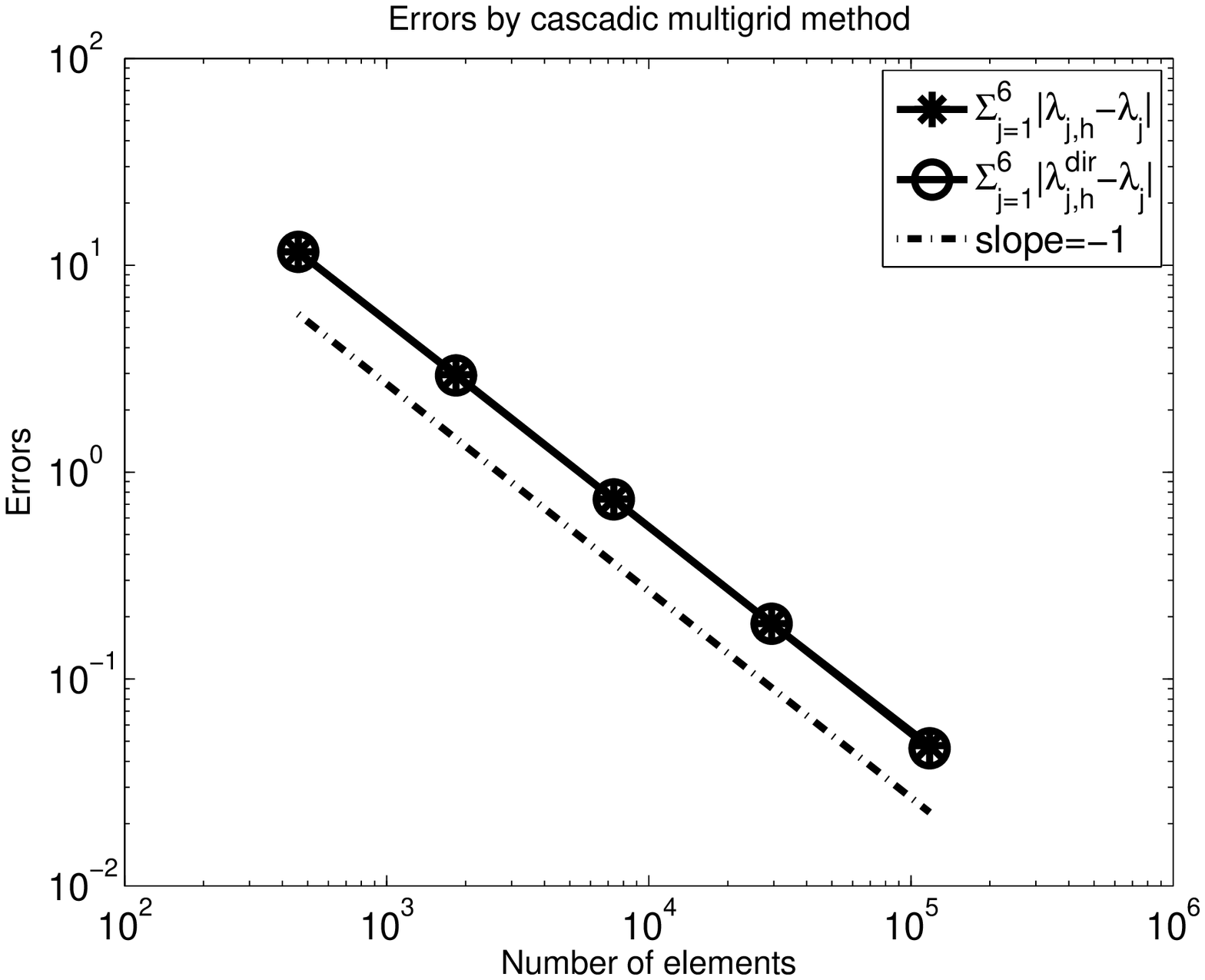}
\caption{\small\texttt The errors of the multigrid
algorithm for the first six eigenvalues and the first eigenfunction,
where $u_h^{\rm dir}$ and $\lambda_h^{\rm dir}$ denote the eigenfunction
and eigenvalue approximation by direct eigenvalue solving (The left subfigure
is for the coarse initial mesh in the left of Figure \ref{Initial_Mesh}
and the right one for the fine initial mesh in the right of
 Figure \ref{Initial_Mesh}).}\label{numerical_multi_grid_Exam_2}
\end{figure}

\section{Concluding remarks}
In this paper, we give a type of multigrid scheme
to solve eigenvalue problems. The idea here is to combine the shifted-inverse
power iteration method with multilevel meshes to transform the solution of
eigenvalue problem to a series of solutions of the corresponding boundary
value problems which can be done by the multigrid method. As stated in the
numerical examples, {\it Eigenvalue Multigrid Scheme} defined in Algorithm
\ref{Multi_Correction} for one eigenvalue can be extended to the corresponding
version for multi eigenvalues (include simple and multiple eigenvalues).
We state the following version of {\it Eigenvalue Multigrid Scheme} for
$m$ eigenvalues.

Similarly, we first give a type of {\it One Correction Step for Multi Eigenvalues}
for the given eigenpairs approximations $\{\lambda_{j,h_k},u_{j,h_k}\}_{j=1}^{m}$.

\begin{algorithm}\label{Correction_Step_Multiple}
One Correction Step for Multi Eigenvalues
\begin{enumerate}
\item Do $j=1,\cdots,m$
\begin{itemize}
\item Find
$\widetilde{u}_{j,h_{k+1}}\in V_{h_{k+1}}$, such that $\forall v_{h_{k+1}}\in V_{h_{k+1}}$
\begin{eqnarray}\label{aux_problem_Multiple}
a(\widetilde{u}_{j,h_{k+1}},v_{h_{k+1}})-\alpha_{j,k+1}b(\widetilde{u}_{j,h_{k+1}},v_{h_{k+1}})
=b(u_{j,h_k},v_{h_{k+1}}).
\end{eqnarray}
\item Do the following orthogonalization and normalization
\begin{eqnarray}\label{Orthogonalization}
\widehat{u}_{j,h_{k+1}}&:=&\widetilde{u}_{j,h_{k+1}}-\sum_{\ell=1}^{j-1}
a(\widetilde{u}_{j,h_{k+1}},u_{\ell,h_{k+1}})u_{\ell,h_{k+1}},\\
u_{j,h_{k+1}}&:=&\frac{\widehat{u}_{j,h_{k+1}}}{\|\widehat{u}_{j,h_{k+1}}\|_a}.
\end{eqnarray}
%
\end{itemize}
End Do
\item Compute the new eigenvalue approximations
\begin{eqnarray}\label{Eigenvalue_ell}
\lambda_{j,h_{k+1}}=\frac{a(u_{j,h_{k+1}},u_{j,h_{k+1}})}{b(u_{j,h_{k+1}},u_{j,h_{k+1}})},
\ \ \ {\rm for}\ j=1, \ldots, m.
\end{eqnarray}
\end{enumerate}
We summarize the above two steps into
\begin{eqnarray*}
\{\lambda_{j,h_{k+1}},u_{j,h_{k+1}}\}_{j=1}^{m}={\it
Correction}(\{\alpha_{j,k+1}\}_{j=1}^m,\{\lambda_{j,h_k},u_{j,h_k}\}_{j=1}^{m},V_{h_{k+1}}).
\end{eqnarray*}
\end{algorithm}
Similarly to (\ref{Correction_Step_Multi_Steps}), we can also define a modified one correction
 step for multi eigenvalues where we need to solve a small dimensional eigenvalue problem.

\begin{algorithm}\label{Correction_Step_Multiple_Modified}
One Correction Step for Multi Eigenvalues
\begin{enumerate}
\item Do $j=1,\cdots,m$
\begin{itemize}
\item[] Find
$\widetilde{u}_{j,h_{k+1}}\in V_{h_{k+1}}$, such that $\forall v_{h_{k+1}}\in V_{h_{k+1}}$ satisfying
\begin{eqnarray}\label{aux_problem_Multiple}
a(\widetilde{u}_{j,h_{k+1}},v_{h_{k+1}})-\alpha_{j,k+1}b(\widetilde{u}_{j,h_{k+1}},v_{h_{k+1}})
=b(u_{j,h_k},v_{h_{k+1}}).
\end{eqnarray}
\end{itemize}
End Do
\item Build a finite dimensional space
$\widetilde{V}_{h_{k+1}}={\rm span}\{\widetilde{u}_{1,h_{k+1}},
 \cdots, \widetilde{u}_{m,h_{k+1}}\}$
and solve the following eigenvalue problem:

Find $(\lambda_{j,h_{k+1}},u_{j,h_{k+1}})\in
\mathcal{R}\times \widetilde{V}_{h_{k+1}}$, $j=1,2,\cdots,m$, such that
$a(u_{j,h_{k+1}},u_{j,h_{k+1}})=1$ and
\begin{eqnarray}\label{Initial_Eigen_Problem_Multiple}
a(u_{j,h_{k+1}},v_{h_{k+1}})&=&\lambda_{j,h_{k+1}}b(u_{j,h_{k+1}},v_{h_{k+1}}),
\ \ \ \ \forall v_{h_{k+1}}\in \widetilde{V}_{h_{k+1}}.
\end{eqnarray}
\end{enumerate}
We summarize the above two steps into
\begin{eqnarray*}
\{\lambda_{j,h_{k+1}},u_{j,h_{k+1}}\}_{j=1}^{m}={\it
Correction}(\{\alpha_{j,k+1}\}_{j=1}^m,\{\lambda_{j,h_k},u_{j,h_k}\}_{j=1}^{m},V_{h_{k+1}}).
\end{eqnarray*}
\end{algorithm}
Based on Algorithm \ref{Correction_Step_Multiple} or \ref{Correction_Step_Multiple_Modified},
we can give the corresponding
multigrid method.
\begin{algorithm}\label{Multi_Correction_Multiple}
Eigenvalue Multigrid Scheme for Multi Eigenvalues
\begin{enumerate}
\item Construct a series of nested finite element
spaces $V_{h_1}, V_{h_2},\cdots,V_{h_n}$ such that
(\ref{FEM_Space_Series}) and (\ref{Error_k_k_1}) hold.
\item Solve the following eigenvalue problem:

Find $(\lambda_{h_1},u_{h_1})\in \mathcal{R}\times V_{h_1}$ such that
$a(u_{h_1},u_{h_1})=1$ and
\begin{eqnarray}\label{Initial_Eigen_Problem_Multiple}
a(u_{h_1},v_{h_1})&=&\lambda_{h_1}b(u_{h_1},v_{h_1}),\ \ \ \ \forall v_{h_1}\in V_{h_1}.
\end{eqnarray}
Choose $m$ eigenpairs $\{\lambda_{j,h_1},u_{j,h_1}\}_{j=1}^{m}$ which approximate
our desired eigenvalues and their eigenspaces.

\item  Do $k=1,\cdots,n-1$

Obtain new eigenpair approximations
$\{\lambda_{j,h_{k+1}},u_{j,h_{k+1}}\}_{j=1}^{m}\in \mathcal{R}\times V_{h_{k+1}}$
by a correction step
\begin{eqnarray*}
\{\lambda_{j,h_{k+1}},u_{j,h_{k+1}}\}_{j=1}^{m}={\it
Correction}(\{\alpha_{j,k+1}\}_{j=1}^m,\{\lambda_{j,h_k},u_{j,h_k}\}_{j=1}^{m},V_{h_{k+1}}).
\end{eqnarray*}
End Do
\end{enumerate}
Finally, we obtain $m$ eigenpair approximations
$\{\lambda_{j,h_n},u_{j,h_n}\}_{j=1}^{m}\in \mathcal{R}\times V_{h_n}$.
\end{algorithm}
We can also define
\begin{eqnarray}\label{alpha_defintion_j_1_1}
\alpha_{j,k+1}=\max\left\{0,\frac{2(1+C_4/C_5)
\beta\lambda_{j,h_k}-\lambda_{j+1,h_k}}{2(1+C_4/C_5)\beta-1}\right\},
\ \ \ j=1, \ldots, m-1
\end{eqnarray}
and
\begin{eqnarray}\label{alpha_defintion_j_1_2}
\alpha_{m,k+1}=\max\left\{0,\frac{2(1+C_4/C_5)
\beta\lambda_{m,h_k}-\lambda_{m+1,h_1}}{2(1+C_4/C_5)\beta-1}\right\}.
\end{eqnarray}
Based on the above definitions of $\alpha_{j,k+1}$, we can also give the error analysis for
this version of eigenvalue multigrid method in the similar way used in Sections 3 and 4.
If we use the mulgirid method (the multigrid method for indefinite problems from
\cite{Shaidurov,Xu_Two_Grid}) to solve the boundary value problems included in
Algorithm \ref{Multi_Correction_Multiple}, the computational work involved in the multi
eigenvalues version is $\mathcal{O}(m^2N_n)$.
 Furthermore, the parallel computation can be used to solve (\ref{aux_problem_Multiple})
for  different $j$.
The analysis of the scheme for multi eigenvalues will be given in our future work.

We can replace the multigrid method by other types of efficient iteration schemes
such as algebraic multigrid method, the type of preconditioned schemes based on
the subspace decomposition and subspace corrections
(see, e.g., \cite{BrennerScott, Xu}), and the domain decomposition method
(see, e.g., \cite{ToselliWidlund}). Furthermore, the framework here
can also be coupled with the parallel method and the adaptive refinement
technique. The ideas should be extended to other types of linear eigenvalue
problems. These will be investigated in our future work.


\begin{thebibliography}{17}

\bibitem{Babuska2}
I. Babu\v{s}ka and J. Osborn, {\em Finite element-Galerkin
approximation of the eigenvalues and eigenvectors of selfadjoint
problems}, Math. Comp. 52 (1989), 275--297.

\bibitem{BabuskaOsborn}
I. Babu\v{s}ka and J. Osborn, {\em Eigenvalue Problems}, In Handbook of
Numerical Analysis, Vol. II, (Eds. P. G. Lions and P. G. Ciarlet),
Finite Element Methods (Part 1), North-Holland, Amsterdam, 641--787,
1991.

%
%
%
%

\bibitem{BrandtMcCormickRuge}
A. Brandt, S. McCormick and J. Ruge, {\em Multigrid methods for differential eigenproblems},
SIAM J. Sci. Stat. Comput., 4(2) (1983), 244--260.

\bibitem{BrennerScott}
S. Brenner and L. Scott, {\em The Mathematical Theory of Finite Element
Methods}, New York: Springer-Verlag, 1994.


\bibitem{Chatelin}
F. Chatelin, {\em Spectral Approximation of Linear Operators}, Academic
Press Inc, New York, 1983.

\bibitem{Ciarlet}
P. G. Ciarlet, {\em The finite Element Method for Elliptic Problem},
North-Holland Amsterdam, 1978.



\bibitem{Hackbusch}
W. Hackbusch, {\em On the computation of approximate eigenvalues and eigenfunctions of
 elliptic operators by means of
a multi-grid method}, Siam J. Numer. Anal., 16(2) (1979), 201--215.

\bibitem{Hackbusch_Book}
W. Hackbusch, {\em Multi-grid Methods and Applications}, Springer-Verlag, Berlin, 1985.

\bibitem{HuCheng}
X. Hu and X. Cheng,
{\em Acceleration of a two-grid method for eigenvalue problems}, Math. Comp.,
80(275) (2011), 1287--1301.


\bibitem{LehoucqMaschhoffSorensenYang}
R. Lehoucq, K. Maschhoff, D. Sorensen and C. Yang, ARPACK Software Package,
http://www.caam.rice.edu/software/ARPACK/, 1996.

\bibitem{LehoucqSorensenYang}
R. Lehoucq, D. C. Sorensen, and C. Yang, ARPACK Users' Guide, SIAM, Philadelphia, 1998.



\bibitem{LinLin}
Q. Lin and J. Lin, {\em Finite Element Methods: Accuracy and Inprovement}, China Sci.
Tech. Press, 2005. 


\bibitem{LinXie}
Q. Lin and H. Xie, {\em A multi-level correction scheme for eigenvalue problems},
Math. Comp., doi: S 0025-5718(2014)02825-1, March 10, 2014.


%


\bibitem{LinYan}
Q. Lin and N. Yan, {\em The Construction and Analysis of High Efficiency
Finite Element Methods}, Hebei University Publishers, 1995. (in Chinese)



\bibitem{Shaidurov}
V. Shaidurov, {\em Multigrid methods for finite element}, Kluwer
Academic Publ., Netherlands, 1995.

\bibitem{ToselliWidlund}
A. Toselli and O. Widlund, {\em Domain Decomposition Methods: Algorithm and Theory},
Springer-Verlag, Berlin Heidelberg, 2005.


\bibitem{Xie_IMA}
H. Xie, {\em A type of multilevel method for the Steklov eigenvalue problem},
IMA J. Numer. Anal., 34(2) (2014), 592--608.


\bibitem{Xie_JCP}
H. Xie, {\em A multigrid method for eigenvalue problem}, J. Comput. Phys., 274 (2014),
550--561.


\bibitem{Xu}
J. Xu, {\em Iterative methods by space decomposition and subspace
correction}, SIAM Review, 34(4) (1992), 581--613.

\bibitem{Xu_Two_Grid}
J. Xu, {\em A new class of iterative methods for nonselfadjoint or indefinite problems},
SIAM J. Numer. Anal., 29 (1992), 303--319.


\bibitem{XuZhou}
J. Xu and A. Zhou, {\em A two-grid discretization scheme for eigenvalue
problems}, Math. Comput., 70 (233) (2001), 17--25.


\bibitem{YangBi}
Y. Yang and H. Bi,  {\em Two-grid finite element discretization schemes based on
shifted-inverse power method for elliptic eigenvalue problems},
SIAM J. Numer. Anal., 49(4) (2011), 1602--1624.

%
\end{thebibliography}
\end{document}